\numberwithin{equation}{section}
\numberwithin{figure}{section}
  \theoremstyle{plain}
  \newtheorem*{question*}{\protect\questionname}
  \theoremstyle{plain}
  \newtheorem*{thm*}{\protect\theoremname}
  \theoremstyle{plain}
  \newtheorem*{cor*}{\protect\corollaryname}
\theoremstyle{plain}
\newtheorem{thm}{\protect\theoremname}[section]
  \theoremstyle{plain}
  \newtheorem{lem}[thm]{\protect\lemmaname}
  \theoremstyle{definition}
  \newtheorem{example}[thm]{\protect\examplename}
  \theoremstyle{plain}
  \newtheorem{prop}[thm]{\protect\propositionname}
\theoremstyle{definition}
\newtheorem{parn}{}[subsection]
\newcommand{\bug}{X}
  \providecommand{\corollaryname}{Corollary}
  \providecommand{\examplename}{Example}
  \providecommand{\lemmaname}{Lemma}
  \providecommand{\propositionname}{Proposition}
  \providecommand{\questionname}{Question}
  \providecommand{\theoremname}{Theorem}
\providecommand{\theoremname}{Theorem}
\begin{document}

\title{Complements of hyperplane sub-bundles in projective space bundles
over $\mathbb{P}^{1}$}

\author{Adrien Dubouloz}

\address{CNRS, Institut de Math\'ematiques de Bourgogne, Universit\'e de
Bourgogne, 9 avenue Alain Savary - BP 47870, 21078 Dijon cedex, France}

\email{adrien.dubouloz@u-bourgogne.fr}

\keywords{Hyperplane sub-bundles, affine-linear bundles, Danilov-Gizatullin
Theorem}

\subjclass[2000]{14L30, 14R05, 14R25 }
\begin{abstract}
We establish that the isomorphy type as an abstract algebraic variety
of the complement of an ample hyperplane sub-bundle $H$ of a $\mathbb{P}^{r-1}$-bundle
$\mathbb{P}(E)\rightarrow\mathbb{P}^{1}$ depends only on the the
$r$-fold self-intersection $(H^{r})\in\mathbb{Z}$ of $H$. In particular
it depends neither on the ambient bundle $\mathbb{P}(E)$ nor on the
choice of a particular ample sub-bundle with given $r$-fold self-intersection.
Our proof exploits the unexpected property that every such complement
comes equipped with the structure of a non trivial torsor under a
vector bundle over the affine line with a double origin. 
\end{abstract}
\maketitle

\section*{Introduction }

The Danilov-Gizatullin Isomorphism Theorem \cite[Theorem 5.8.1]{Gizatullin1977}
(see also \cite{Dubouloz2011a,Flenner2009} for short self-contained
proofs) is a surprising result which asserts that the isomorphy type
as an abstract algebraic variety of the complement of an ample section
$C$ of a $\mathbb{P}^{1}$-bundle $\overline{\nu}:\mathbb{P}(E)\rightarrow\mathbb{P}_{\mathbb{C}}^{1}$
over the complex projective line depends only on the self-intersection
$(C^{2})\geq2$ of $C$. In particular, it depends neither on the
ambient bundle nor on the chosen ample section with fixed self-intersection
$d$. For such a section, the locally trivial fibration $\nu:\mathbb{P}(E)\setminus C\rightarrow\mathbb{P}^{1}$
induced by the restriction of the structure morphism $\overline{\nu}$
is homeomorphic in the euclidean topology to the complex line bundle
$\mathcal{O}_{\mathbb{P}^{1}}\left(-d\right)\rightarrow\mathbb{P}^{1}$.
However the non vanishing of $H^{1}(\mathbb{P}^{1},\mathcal{O}_{\mathbb{P}^{1}}(-d))$
for $d\geq2$ implies that $\nu:\mathbb{P}(E)\setminus C\rightarrow\mathbb{P}^{1}$
is in general a non trivial algebraic $\mathcal{O}_{\mathbb{P}^{1}}(-d)$-torsor,
and the ampleness of $C$ is in fact precisely equivalent to its non
triviality. So the Danilov-Gizatullin Theorem can be rephrased as
the fact that the isomorphy type as an abstract algebraic variety
of the total space of a non trivial $\mathcal{O}_{\mathbb{P}^{1}}(-d)$-torsor
is uniquely determined by its underlying structure of topological
complex line bundle over $\mathbb{P}^{1}$. 

More generally, given a vector bundle $E\rightarrow\mathbb{P}^{1}$
of rank $r\geq3$ and a sub-vector bundle $F\subset E$ of corank
one with quotient line bundle $L$, the complement in the projective
bundle $\overline{\nu}:\mathbb{P}\left(E\right)\rightarrow\mathbb{P}^{1}$
of lines in $E$ of the hyperplane sub-bundle $H=\mathbb{P}\left(F\right)$
inherits the structure of an $F\otimes L^{-1}$-torsor $\nu:\mathbb{P}\left(E\right)\setminus H\rightarrow\mathbb{P}^{1}$.
Similarly as above, the latter is homeomorphic in the euclidean topology
to the complex vector bundle $F\otimes L^{-1}\rightarrow\mathbb{P}^{1}$.
Furthermore, $F\otimes L^{-1}$ is homeomorphic as a complex vector
bundle to $\det(F\otimes L^{-1})\oplus\mathbb{A}_{\mathbb{P}^{1}}^{r-2}\simeq\mathcal{O}_{\mathbb{P}^{1}}(-(H^{r}))\oplus\mathbb{A}_{\mathbb{P}^{1}}^{r-2}$,
where $(H^{r})\in\mathbb{Z}$ denotes the $r$-fold intersection product
of $H$ with itself. So one may ask by analogy with the one dimensional
case if for an ample $H$, the integer $(H^{r})\geq r$ uniquely determines
the isomorphy type of $\mathbb{P}\left(E\right)\setminus H$ as an
algebraic variety. However, since the ampleness of $H$ is in general
no longer equivalent to the non triviality of the torsor $\nu:\mathbb{P}\left(E\right)\setminus H\rightarrow\mathbb{P}^{1}$,
the following problem seems more natural: 
\begin{question*}
Is the isomorphy type as an abstract algebraic variety of the total
space of a nontrivial torsor under an algebraic vector bundle $G\rightarrow\mathbb{P}^{1}$
uniquely determined by the isomorphy type of $G$ as a topological
complex vector bundle over $\mathbb{P}^{1}$, that is, by the rank
and the degree of $G$ ? 
\end{question*}
While our results imply in particular that non trivial torsors under
homeomorphic complex vector bundles do indeed have isomorphic total
spaces, the answer to this question is negative in general: there
exists torsors under non homeomorphic vector bundles or rank $r\geq2$
which have isomorphic total spaces as algebraic varieties. For instance,
the complement of the diagonal in $\mathbb{P}^{1}\times\mathbb{P}^{1}$
is an affine surface $S$ which inherits two structures  of non trivial
$\mathcal{O}_{\mathbb{P}^{1}}(-2)$-torsor via the first and the second
projections ${\rm pr}_{i}:S\rightarrow\mathbb{P}^{1}$, $i=1,2$.
The Picard group of $S$ is isomorphic to $\mathbb{Z}$ and for every
$k\in\mathbb{Z}$ the line bundles ${\rm pr}_{1}^{*}\mathcal{O}_{\mathbb{P}^{1}}(k)$
and ${\rm pr}_{2}^{*}\mathcal{O}_{\mathbb{P}^{1}}(-k)$ are isomorphic.
It follows that for every $k\in\mathbb{Z}$, the total space of ${\rm pr}_{1}^{*}\mathcal{O}_{\mathbb{P}^{1}}(k)$
is simultaneously the total space of an $\mathcal{O}_{\mathbb{P}^{1}}(-2)\oplus\mathcal{O}_{\mathbb{P}^{1}}(k)$-torsor
and of an $\mathcal{O}_{\mathbb{P}^{1}}(-2)\oplus\mathcal{O}_{\mathbb{P}^{1}}(-k)$-torsor
over $\mathbb{P}^{1}$ via the first and the second projection respectively.
In particular, for every $k\neq0$, we obtain an affine variety which
is simultaneously the total space of non trivial torsors under complex
vector bundles over $\mathbb{P}^{1}$ with different topological types. 

So the degree of the complex vector bundle $G\rightarrow\mathbb{P}^{1}$
is not the appropriate numerical invariant to classify isomorphy types
of total spaces of non trivial algebraic $G$-torsors. In contrast,
our main result can be summarized as follows: 
\begin{thm*}
\label{thm:Main_P1_Thm} The total space of a torsor $\nu:V\rightarrow\mathbb{P}^{1}$
under a vector bundle $G\rightarrow\mathbb{P}^{1}$ is an affine variety
if and only if $\nu:V\rightarrow\mathbb{P}^{1}$ is a non trivial
torsor. If so, the isomorphy type of $V$ as an abstract algebraic
variety is uniquely determined by the rank of $G$ and the absolute
value of $\deg G+2$. 
\end{thm*}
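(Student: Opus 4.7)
The plan is to settle the affineness dichotomy first, then to construct the promised torsor structure under a vector bundle on the affine line with a double origin $X$, and finally to carry out the classification in the $X$-setting, where much richer identifications become available than on $\mathbb{P}^{1}$.

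For the dichotomy, if $\nu\colon V\to\mathbb{P}^{1}$ is a trivial $G$-torsor then its total space contains the image of the zero section, hence a copy of $\mathbb{P}^{1}$, and so is not affine. Conversely, a non-trivial class in $H^{1}(\mathbb{P}^{1},G)$ is represented by a short exact sequence $0\to G\to E\to\mathcal{O}_{\mathbb{P}^{1}}\to 0$; I would identify $V$ with the complement of the hyperplane sub-bundle $H=\mathbb{P}(G)\subset\mathbb{P}(E)$, and check that non-triviality of the extension is precisely equivalent to ampleness of $H$, generalising the characterisation recalled in the introduction for $r=2$. Serre's criterion then yields affineness.

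For the $X$-torsor structure, I would aim to produce a second morphism $\tilde{\nu}\colon V\to X$ turning $V$ into a non-trivial torsor under a rank-$r$ vector bundle $\tilde{G}$ on $X$. The intuition is that $\mathbb{P}^{1}$ and $X$ are both obtained by gluing two copies of $\mathbb{A}^{1}$ along $\mathbb{G}_{m}$, differing only in the gluing automorphism (inversion $t\mapsto t^{-1}$ for $\mathbb{P}^{1}$ versus the identity for $X$). Over each affine chart the original torsor is trivial, and the non-triviality of the $\mathbb{P}^{1}$-cocycle---concretely, the ampleness of $H$---should supply exactly the data needed to re-assemble the two trivializations into a torsor $\tilde{\nu}\colon V\to X$ under a new bundle $\tilde{G}$ of the same rank.

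For the classification over $X$, I would exploit two features of $X$ absent from $\mathbb{P}^{1}$. Vector bundles on $X$ still split as $\bigoplus\mathcal{O}_{X}(a_{i})$, but the involution $\sigma$ of $X$ swapping the two origins satisfies $\sigma^{*}\mathcal{O}_{X}(m)\cong\mathcal{O}_{X}(-m)$ and so identifies total spaces of torsors under bundles of opposite degrees; this accounts for the absolute value in $|\deg G+2|$. Moreover, the automorphism group of $\tilde{G}$ over $X$ is large enough to put any non-trivial translation cocycle into a normal form $(t^{-m},0,\dots,0)$ for a single $m\geq 1$. Tracking the construction of the previous paragraph should identify $m$ with $|\deg G+2|$, the shift of $2$ reflecting the difference between the gluing cocycles for $\mathbb{P}^{1}$ and $X$. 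The main obstacle is precisely this pair of steps: the construction of the $X$-torsor and the reduction to normal form both require a delicate analysis of the residual coboundary freedom on $X$ and of the $\sigma$-action, which is what ultimately forces the answer to depend only on $r$ and $|\deg G+2|$.
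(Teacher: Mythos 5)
There is a genuine gap, and it sits at the very first substantive step. You assert that non-triviality of the extension $0\rightarrow G\rightarrow E\rightarrow\mathcal{O}_{\mathbb{P}^{1}}\rightarrow0$ is \emph{equivalent} to ampleness of $H=\mathbb{P}(G)\subset\mathbb{P}(E)$, ``generalising the characterisation recalled in the introduction for $r=2$''. That equivalence is special to the rank-one case and fails for ${\rm rk}\,G\geq2$: the paper stresses exactly this point in the introduction (it is the reason the question is phrased in terms of non-trivial torsors rather than ample sub-bundles), and the example following Theorem \ref{thm:MainThmP1} makes it explicit --- the non-trivial torsors $V_{k}$ under $F_{k}=\Omega_{\mathbb{P}^{1}}^{1}\oplus\mathcal{O}_{\mathbb{P}^{1}}(k)$ sit in $\mathbb{P}(E_{k})$ with complement $H_{k}=\mathbb{P}(F_{k})$, where $H_{0}$ is nef but not ample and $H_{k}$ for $k>2$ even has $(H_{k}^{3})=2-k<0$, yet all these complements are affine. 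So your route to affineness via Serre's criterion collapses. The paper's actual argument avoids ampleness entirely: split $G=\bigoplus_{i=1}^{r}\mathcal{O}_{\mathbb{P}^{1}}(k_{i})$, pick an index with non-zero component $a_{i}$ of the class in $H^{1}(\mathbb{P}^{1},G)\simeq\bigoplus H^{1}(\mathbb{P}^{1},\mathcal{O}_{\mathbb{P}^{1}}(k_{i}))$ (forcing $k_{i}\leq-2$), quotient by the corank-one sub-bundle $E_{i}$ to get a non-trivial $\mathcal{O}_{\mathbb{P}^{1}}(k_{i})$-torsor surface $S_{i}$, which is affine; then $V\rightarrow S_{i}$ is a torsor over an affine base, hence trivial, so $V\simeq S_{i}\times_{\mathbb{P}^{1}}E_{i}$ is affine.

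The second gap is your treatment of the $\bug$-structure as a formal re-gluing. The $\bug$-fibration on $V$ is \emph{not} obtained by reassembling the two trivializing charts of the $\mathbb{P}^{1}$-torsor: on the model surface $S(d-2)$ of Example \ref{exa:Sd_surfaces} the fibrations $\zeta_{d-2}:S(d-2)\rightarrow\bug$ and $\nu_{d-2}:S(d-2)\rightarrow\mathbb{P}^{1}$ have genuinely different fibers, and identifying an arbitrary non-trivial $\mathcal{O}_{\mathbb{P}^{1}}(-d)$-torsor with this model is precisely the refined Danilov--Gizatullin Theorem (Proposition \ref{prop:Explicit-GizDan}, imported from \cite{Dubouloz2011a}), including the Picard-group normalization $\nu^{*}\mathcal{O}_{\mathbb{P}^{1}}(1)=\zeta_{d-2}^{*}\mathbb{L}^{-1}$ needed to convert the pulled-back summands $\mathcal{O}_{\mathbb{P}^{1}}(k_{j})$ into powers of $\mathbb{L}$. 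Your sketch offers no substitute for this deep input; ``the non-triviality should supply exactly the data needed'' is where the whole difficulty lives, since the trivial torsor admits no $\bug$-torsor structure at all. Finally, over $\bug$ your plan of normalizing the translation cocycle within a \emph{fixed} bundle $\tilde{G}$ to $(t^{-m},0,\ldots,0)$ cannot yield dependence on $\deg\tilde{G}$ alone: one must also compare torsors under bundles of equal degree but different splitting types, which the paper does via $K_{0}(\bug)$ (Lemma \ref{lem:K0-bug} and Proposition \ref{prop:Aff-bug-K0}, with the explicit $2\times2$ factorizations giving $S(m)\times_{\bug}S(n)\simeq S(m+n)\times_{\bug}\mathbb{A}_{\bug}^{1}$), combined with the fibration trick of Lemma \ref{lem:TorsB-on-bug} identifying all non-trivial torsors under a fixed bundle. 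The one ingredient you do have right is the involution $\theta$ with $\theta^{*}\mathbb{L}\simeq\mathbb{L}^{-1}$, which accounts for the absolute value $\left|\deg G+2\right|$.
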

The role of the integer $\left|\deg G+2\right|$ may look surprising
but the latter is intimately related to the subgroup of the Picard
group ${\rm Pic}\left(V\right)\simeq\mathbb{Z}$ of $V$ generated
by the canonical bundle $K_{V}=\det(\Omega_{V}^{1})$ of $V$. Indeed,
for a $G$-torsor $\nu:V\rightarrow\mathbb{P}^{1}$, the relative
cotangent bundle $\Omega_{V/\mathbb{P}^{1}}^{1}$ is isomorphic to
$\nu^{*}G^{\vee}$ and so it follows from the relative cotangent exact
sequence 
\[
0\rightarrow\nu^{*}\Omega_{\mathbb{P}^{1}}^{1}\rightarrow\Omega_{V}^{1}\rightarrow\Omega_{V/\mathbb{P}^{1}}^{1}\simeq\nu^{*}G^{\vee}\rightarrow0
\]
that $K_{V}\simeq\nu^{*}(\det G^{\vee}\otimes\Omega_{\mathbb{P}^{1}}^{1})$
whence that the subgroup of ${\rm Pic}(V)$ generated by $K_{V}$
is isomorphic to $\left|\deg G+2\right|\mathbb{Z}\subset\mathbb{Z}$. 

In the particular case where $\nu:V\rightarrow\mathbb{P}^{1}$ is
a torsor arising as the complement $\mathbb{P}\left(E\right)\setminus H$
of an ample hyperplane sub-bundle $H$, one has $\deg G=-\left(H^{r}\right)\leq-r$
and so the above characterization specializes to the following generalization
of the geometric form of the Danilov-Gizatullin Theorem: 
\begin{cor*}
The isomorphy type as an abstract algebraic variety of the complement
of an ample hyperplane sub-bundle $H$ of a $\mathbb{P}^{r-1}$-bundle
$\rho:\mathbb{P}\left(E\right)\rightarrow\mathbb{P}^{1}$ depends
only on the $r$-fold self-intersection $\left(H^{r}\right)\geq r$
of $H$. 
\end{cor*}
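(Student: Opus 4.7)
The plan is to deduce the corollary directly from the main theorem by endowing $V=\mathbb{P}(E)\setminus H$ with its canonical torsor structure and then translating the data. Writing $H=\mathbb{P}(F)$ for a corank one sub-bundle $F\subset E$ with quotient line bundle $L=E/F$, the introduction already recalls that the restriction $\nu:V\to\mathbb{P}^{1}$ of the structure morphism $\overline{\nu}:\mathbb{P}(E)\to\mathbb{P}^{1}$ inherits the structure of a torsor under the rank $r-1$ vector bundle $G=F\otimes L^{-1}$. Once the non-triviality of this torsor is verified, the main theorem will pin down the isomorphy type of $V$ in terms of the rank of $G$ and the integer $|\deg G+2|$.

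First I would check non-triviality. Ampleness of $H$ implies that $V$ is affine, being the complement of an ample divisor in a smooth projective variety. On the other hand, the total space of the trivial $G$-torsor is the total space of the vector bundle $G$ itself, which contains $\mathbb{P}^{1}$ as its zero section and is therefore not affine. Hence the torsor $\nu:V\to\mathbb{P}^{1}$ must be non-trivial, and the main theorem applies. (This equivalence is in fact built into the statement of the main theorem, so I am using only one direction of it.)

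Next I would translate the numerical invariants. The rank of $G$ equals $r-1$, and the identification $\det G\simeq\det F\otimes L^{-(r-1)}\simeq\mathcal{O}_{\mathbb{P}^{1}}(-(H^{r}))$ already recorded in the introduction gives $\deg G=-(H^{r})$. Since ampleness of $H$ yields $(H^{r})\geq r\geq 2$, we obtain $|\deg G+2|=(H^{r})-2$. Applying the main theorem, the isomorphy class of $V$ depends only on the pair $(r-1,\,(H^{r})-2)$; and because the rank $r-1$ of $G$ is recovered intrinsically from $V$ as $\dim V-1$, it depends in fact only on $(H^{r})$, as asserted. Given the main theorem, I foresee no genuine obstacle in this argument: the corollary amounts to unwinding the torsor description and performing the self-intersection computation, and the one potentially subtle point, namely the affineness-versus-non-triviality dichotomy for torsors over $\mathbb{P}^{1}$, is already contained in the main theorem itself.
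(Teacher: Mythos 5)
Your proposal is correct and coincides with the paper's own deduction: the paper likewise views $V=\mathbb{P}(E)\setminus H$ as a torsor under $G=F\otimes L^{-1}$ of rank $r-1$ with $\deg G=-\left(H^{r}\right)\leq-r$, and specializes the main theorem, whose first assertion (affine if and only if non-trivial) is exactly the dichotomy you invoke via ampleness of $H$. Your computation $\left|\deg G+2\right|=\left(H^{r}\right)-2$ and the remark that the rank is fixed by $r$ complete the argument just as in the paper.
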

The proof of the above results exploits a hidden and unexpected feature
of total spaces of non trivial torsors $\nu:V\rightarrow\mathbb{P}^{1}$
under a vector bundle $G\rightarrow\mathbb{P}^{1}$ of rank $r\geq1$,
namely the existence on every such $V$ of the structure $\rho:V\rightarrow\bug$
of a non trivial torsor under a vector bundle $\tilde{G}\rightarrow\bug$
of the same rank $r$, on a non separated scheme $\bug$, isomorphic
to the affine line with a double origin. The structure of these bundles
$\tilde{G}\rightarrow\bug$ is very similar to that of vector bundles
on $\mathbb{P}^{1}$: in particular the existence of a covering of
$\bug$ by two open subsets isomorphic to $\mathbb{C}$ and intersecting
along $\mathbb{C}^{*}$ implies that as a topological complex vector
bundle, $\tilde{G}\rightarrow\bug$ is uniquely determined by a homotopy
class of maps $S^{1}\rightarrow{\rm GL}_{r}\left(\mathbb{C}\right)$,
whence simply by the {}``degree'' of its determinant. In this setting,
we establish that the total space of a non trivial torsor $\nu:V\rightarrow\mathbb{P}^{1}$
under a vector bundle $G\rightarrow\mathbb{P}^{1}$ of degree $d$
carries the structure of a non trivial torsor $\rho:V\rightarrow\bug$
under a vector bundle $\tilde{G}\rightarrow\bug$ of degree $d+2$
uniquely determined by $G$. While there exists infinite moduli for
isomorphy type of total spaces of non trivial torsors under a line
bundle on $\bug$, the Danilov-Gizatullin Theorem can be re-interpreted
as the fact that the total spaces of non trivial torsors $\nu:V\rightarrow\mathbb{P}^{1}$
under $G=\mathcal{O}_{\mathbb{P}^{1}}(-d)$ are all isomorphic as
torsors $\rho:V\rightarrow\bug$ under the corresponding line bundle
$\tilde{G}$. For higher dimensional vector bundles $\tilde{G}\rightarrow\bug$,
we establish in contrast that the isomorphy type as an abstract variety
of the total space of a non trivial $\tilde{G}$-torsor $\rho:V\rightarrow\bug$
is uniquely determined by the absolute value of the degree of $\tilde{G}$.
\\

The article is organized as follows: the first section recalls basic
properties of torsors under vector bundles. Then section two is devoted
to the study of isomorphy types of total spaces of non trivial torsors
under vector bundles on the affine line with a double origin. These
results are applied in the third section to the case of torsors under
vector bundles on $\mathbb{P}^{1}$.

\section{Recollection on affine-linear bundles }

Here, to fix the notation and convention that will be used in the
sequel, we briefly recall classical facts about vector bundles, projective
bundles and affine-linear bundles.

\subsection{Vector bundles and projective bundles }

\begin{parn} A \emph{vector bundle} \emph{of rank} $r\geq1$ on a
scheme $X$ is the relative spectrum $p:F={\rm Spec}_{X}({\rm Sym}(\mathcal{F}^{\vee}))\rightarrow X$
of the symmetric algebra of the dual of a locally free coherent $\mathcal{O}_{X}$-module
$\mathcal{F}$ of rank $r$. The $X$-scheme $F$ represents the contravariant
functor from the category of schemes over $X$ to the category of
abelian groups which associates to an $X$-scheme $f:Y\rightarrow X$
the group $\Gamma\left(Y,f^{*}\mathcal{F}\right)$ of global sections
of $f^{*}\mathcal{F}$ over $Y$. It follows in particular that $p:F\rightarrow X$
is a locally constant commutative affine group scheme over $X$, with
group law $F\times_{X}F\rightarrow F$ induced by the diagonal homomorphism
of $\mathcal{O}_{X}$-module $\mathcal{F}^{\vee}\rightarrow\mathcal{F}^{\vee}\oplus\mathcal{F}^{\vee}$. 

\end{parn}

\begin{parn} Given a vector bundle $p:E={\rm Spec}_{X}({\rm Sym}(\mathcal{E}^{\vee}))\rightarrow X$,
the \emph{projective bundle of lines in} $E$ is the relative proj
$\overline{\nu}:\mathbb{P}\left(E\right)={\rm Pro}j_{X}({\rm Sym}(\mathcal{E}^{\vee}))\rightarrow X$
of the symmetric algebra of $\mathcal{E}^{\vee}$ considered as a
graded quasi-quoherent $\mathcal{O}_{X}$-algebra ${\rm Sym}(\mathcal{E}^{\vee})=\bigoplus_{n\geq0}{\rm Sym}^{n}(\mathcal{E}^{\vee})$.
There is a canonical surjection $\overline{\nu}^{*}E^{\vee}\rightarrow\mathcal{O}_{\mathbb{P}\left(E\right)}(1)$
on $\mathbb{P}\left(E\right)$ which yields dually a closed embedding
$\mathcal{O}_{\mathbb{P}\left(E\right)}(-1)\hookrightarrow\overline{\nu}^{*}E$
of the tautogical line bundle as a line sub-bundle of $\overline{\nu}^{*}E$.
More generally, if $f:Y\rightarrow X$ is a scheme over $X$ then
an $X$-morphism $\tilde{f}:Y\rightarrow\mathbb{P}\left(E\right)$
is uniquely determined by a sub-line bundle of $f^{*}E$, which then
coincides with $f^{*}\mathcal{O}_{\mathbb{P}\left(E\right)}(-1)$.
In other word, $\mathbb{P}\left(E\right)$ represents the functor
which associates to an $X$-scheme $f:Y\rightarrow X$ the set of
sub-line bundles of $f^{*}E$. Recall that if $L={\rm Spec}_{X}\left({\rm Sym}(\mathcal{L}^{\vee})\right)\rightarrow X$
is a line bundle on $X$, then the canonical isomorphism ${\rm Sym}\left((\mathcal{E}\otimes\mathcal{L})^{\vee}\right)\simeq\bigoplus_{n\geq0}{\rm Sym^{n}(\mathcal{E}^{\vee})\otimes}(\mathcal{L}^{\vee})^{\otimes n}$
of graded $\mathcal{O}_{X}$-algebras yields an isomorphism $\varphi:\mathbb{P}\left(E\right)\stackrel{\sim}{\rightarrow}\mathbb{P}\left(E\otimes L\right)$
of schemes over $X$ with $\varphi^{*}\mathcal{O}_{\mathbb{P}\left(E\otimes L\right)}\left(-1\right)\simeq\mathcal{O}_{\mathbb{P}\left(E\right)}\left(-1\right)\otimes\overline{\nu}^{*}L$. 

\end{parn}

\subsection{Torsors under vector bundles}

\begin{parn} Torsors under a vector bundle on a scheme are the analogues
in a relative setting of affine spaces attached to a vector space.
Namely, given a vector bundle $p:F={\rm Spec}_{X}({\rm Sym}(\mathcal{F}^{\vee}))\rightarrow X$,
a \emph{principal homogeneous} $F$-\emph{bundle}, or an $F$-\emph{torsor},
is a scheme $\nu:V\rightarrow X$ equipped with an action $\mu:F\times_{X}V\rightarrow V$
of the group scheme $F$ for which there exists a covering of $X$
by open subsets $\{U_{i}\}_{i\in I}$ such that for every $i\in I$,
$V\mid_{U_{i}}=\nu^{-1}(U_{i})$ is equivariantly isomorphic to $F\mid_{U_{i}}$
acting on itself by translations. Given a collection of equivariant
trivializations $\tau_{i}:V\mid_{U_{i}}\stackrel{\sim}{\rightarrow}F\mid_{U_{i}}$,
$i\in I$, it follows that for every $i,j\in I$, $\tau_{i}\circ\tau_{j}^{-1}\mid_{U_{i}\cap U_{j}}$
is an equivariant automorphism of $F\mid_{U_{i}\cap U_{j}}$ whence
is a translation determined by a section $g_{ij}\in\Gamma(U_{i}\cap U_{j},F)=\Gamma(U_{i}\cap U_{j},\mathcal{F})$
of $F\mid_{U_{i}\cap U_{j}}$. Clearly, $g_{ik}\mid_{U_{i}\cap U_{j}\cap U_{k}}=g_{ij}\mid_{U_{i}\cap U_{j}\cap U_{k}}+g_{jk}\mid_{U_{i}\cap U_{j}\cap U_{k}}$
for every $i,j,k\in I$, that is, $(g_{ij})_{i,j\in I}$ is a \v{C}ech
$1$-cocycle with value in the sheaf $\mathcal{F}$ for the open covering
$\left\{ U_{i}\right\} _{i\in I}$. Changing the trivializations $\tau_{i}$
replaces $(g_{ij})_{i,j\in I}$ by a cohomologous cocycle and the
cohomology class is unaltered if $V$ is replaced by an isomorphic
torsor. Thus $\nu:V\rightarrow X$ defines a class $c(V)\in\check{H}^{1}(\{U_{i}\}_{i\in I},F)=\check{H}^{1}(\{U_{i}\}_{i\in I},\mathcal{F})$
and a standard argument eventually shows that there is a one-to-one
correspondence between isomorphy classes of $F$-torsors and elements
of the cohomology group $\check{H}^{1}(X,F)\simeq H^{1}(X,F)$, with
$0\in H^{1}(X,F)$ corresponding the the trivial $F$-torsor $p:F\rightarrow X$
(see e.g. \cite[16.4.9]{Grothendieck}). This implies in particular
that every $F$-torsor on an affine scheme $X$ is isomorphic to the
trivial one. 

\end{parn}

\begin{parn} Recall that the relative cotangent bundle $\Omega_{F/X}^{1}$
of a vector bundle $p:F\rightarrow X$ is canonicaly isomorphic to
$p^{*}F^{\vee}$ \cite[16.5.15]{Grothendieck}. One checks using the
above local description that this holds more generally for any $F$-torsor
$\nu:V\rightarrow X$, providing a canonical short exact sequence
$0\rightarrow\nu^{*}\Omega_{X}^{1}\rightarrow\Omega_{V}^{1}\rightarrow\Omega_{V/X}^{1}\simeq\nu^{*}F^{\vee}\rightarrow0$
of vector bundles on $V$. If $X$ is normal then the natural homomorphism
$\nu^{*}:{\rm Pic}\left(X\right)\stackrel{\sim}{\rightarrow}{\rm Pic}\left(V\right)$
is an isomorphism; the relative canonical bundle $K_{V/X}=\det(\Omega_{V/X}^{1})$
and the canonical bundle $K_{V}=\det(\Omega_{V}^{1})$ of $V$ then
coincide respectively with the images by $\nu^{*}$ of the line bundles
$\det(F^{\vee})$ and $K_{X}\otimes\det(F^{\vee})$ on $X$. 

\end{parn}

\begin{parn} Given a vector bundle $p:F\rightarrow X$, every class
$c\in H^{1}(X,F)$ coincides via the canonical isomorphism $H^{1}(X,F)\simeq{\rm Ext}^{1}\left(\mathbb{A}_{X}^{1},F\right)$
with the isomorphy class of an extension $0\rightarrow F\rightarrow E\rightarrow\mathbb{A}_{X}^{1}\rightarrow0$
of vector bundles on $X$, where $\mathbb{A}_{X}^{1}=X\times\mathbb{A}^{1}$
denotes the trivial line bundle on $X$. The inclusion $F\hookrightarrow E$
induces a closed immersion of $\mathbb{P}\left(F\right)$ into $\overline{\nu}:\mathbb{P}\left(E\right)\rightarrow X$
as the zero locus of the regular section of $\mathcal{O}_{\mathbb{P}\left(E\right)}(1)$
deduced from the composition $\mathcal{O}_{\mathbb{P}\left(E\right)}(-1)\hookrightarrow\overline{\nu}^{*}E\rightarrow\overline{\nu}^{*}\mathbb{A}_{X}^{1}$,
and the complement $\mathbb{P}\left(E\right)\setminus\mathbb{P}\left(F\right)$
is then isomorphic as a scheme over $X$ to the total space of an
$F$-torsor $\nu:V\rightarrow X$ with isomorphy class $c$. In particular,
the trivial extension $E=F\oplus\mathbb{A}_{X}^{1}$ corresponds to
the canonical open immersion of $F$ into $\mathbb{P}(F\oplus\mathbb{A}_{X}^{1})$.
More generally, given a line bundle $L\rightarrow X$ on $X$ and
a short exact sequence of vector bundles $0\rightarrow F\rightarrow E\rightarrow L\rightarrow0$,
the complement of $\mathbb{P}\left(F\right)\simeq\mathbb{P}(F\otimes L^{-1})$
in $\mathbb{P}\left(E\right)\simeq\mathbb{P}(E\otimes L^{-1})$ inherits
the structure of an $F\otimes L^{-1}$-torsor with isomorphy class
in $H^{1}(X,F\otimes L^{-1})\simeq{\rm Ext}^{1}\left(L,F\right)$
given by the class of the extension $0\rightarrow F\rightarrow E\rightarrow L\rightarrow0$. 

\end{parn}

\subsection{Affine-linear bundles }

\begin{parn} Affine-linear bundles over a scheme $X$ from a sub-class
of the class of locally trivial $\mathbb{A}^{n}$-bundle over $X$,
namely, an \emph{affine-linear bundle} of rank $r\geq1$ over $X$
is an $X$-scheme $\nu:V\rightarrow X$ for which there exists a open
covering $\{U_{i}\}_{i\in I}$ of $X$ and a collection of isomorphisms
$\tau_{i}:V\mid_{U_{i}}\stackrel{\sim}{\rightarrow}\mathbb{A}_{U_{i}}^{r}$
of schemes over $U_{i}$ such that for every $i,j\in I$, $\tau_{ij}=\tau_{i}\circ\tau_{j}^{-1}\mid_{U_{i}\cap U_{j}}$
is an affine automorphism of $\mathbb{A}_{U_{i}\cap U_{j}}^{r}={\rm Spec}_{U_{i}\cap U_{j}}(\mathcal{O}_{U_{i}\cap U_{j}}[x_{1},\ldots,x_{r}])$.
This means that there exists $(A_{ij},T_{ij})\in{\rm Aff}_{r}(U_{i}\cap U_{j})={\rm GL}_{r}(U_{i}\cap U_{j})\rtimes\mathbb{G}_{a}^{r}(U_{i}\cap U_{j})$
such that $\tau_{ij}(x_{1},\ldots,x_{r})=A_{ij}(x_{1},\ldots,x_{r})+T_{ij}$
for every $i,j\in I$. It follows that isomorphy classes of affine-linear
bundles of rank $r$ are in one-to-one correspondence with that of
principal homogeneous bundles under the affine group ${\rm Aff}_{r}={\rm GL}_{r}\rtimes\mathbb{G}_{a}^{r}$. 

\end{parn}

\begin{parn} Of course, every torsor under a vector bundle of rank
$r\geq1$ is an affine linear bundle of rank $r$. Conversely, let
$\nu:V\rightarrow X$ be an affine-linear bundle of rank $r\geq1$
with trivializations $\tau_{i}:V\mid_{U_{i}}\stackrel{\sim}{\rightarrow}\mathbb{A}_{U_{i}}^{r}$.
Then for every triple of indices $i,j,k\in I$, the identities 
\[
\begin{cases}
A_{ik} & =A_{jk}A_{ij}\\
^{t}T_{ik} & =A_{jk}\cdot^{t}T_{ij}+^{t}T_{jk}
\end{cases}
\]
hold in ${\rm GL}_{r}(U_{i}\cap U_{j}\cap U_{k})$ and $\mathbb{G}_{a}^{r}(U_{i}\cap U_{j}\cap U_{k})$
respectively. If we identify ${\rm Aff}_{r}(U_{i}\cap U_{j})$ with
the sub-group of ${\rm GL}_{r+1}(U_{i}\cap U_{j})$ consisting of
matrices of the form ${\displaystyle \tilde{A}_{ij}=\left(\begin{array}{cc}
A_{ij} & ^{t}T_{ij}\\
0 & 1
\end{array}\right)}$ these relations say equivalently that $(\tilde{A}_{ij})_{i,j\in I}$
and $(A_{ij})_{i,j\in I}$ are \v{C}ech cocycles with value in ${\rm GL}_{r+1}$
and ${\rm GL}_{r}$ respectively for the open covering $\{U_{i}\}_{i\in I}$
of $X$. These define respectively a vector bundle $E\rightarrow X$
of rank $r+1$ and a sub-vector bundle $F$ of $E$ fitting in a short
exact sequence $0\rightarrow F\rightarrow E\rightarrow\mathbb{A}_{X}^{1}\rightarrow0$
of vector bundles on $X$. By construction, $\nu:V\rightarrow X$
is isomorphic as a scheme over $X$ to the complement of $\mathbb{P}\left(F\right)$
in $\overline{\nu}:\mathbb{P}\left(E\right)\rightarrow X$, whence
can be equipped with the structure of an $F$-torsor. Changing the
trivializations $\tau_{i}$ by means of affine automorphisms changes
the cocycle $(A_{ij})_{i,j\in I}$ for a cohomologous one and the
cohomology class in $H^{1}(X,{\rm GL}_{r})$ is unaltered if we replace
$\nu:V\rightarrow X$ by an isomorphic affine-linear bundle. Therefore,
the vector bundle $F$ for which an affine-linear bundle can be equipped
with the structure of an $F$-torsor is uniquely determined up to
isomorphism. Similarly, the class in ${\rm Ext}^{1}(\mathbb{A}_{X}^{1},F)$
defined by $(\tilde{A}_{ij})_{i,j\in I}$ is unaltered if we change
the $\tau_{i}$'s or replace $\nu:V\rightarrow X$ by an isomorphic
affine-linear bundle, and so the isomorphy class in $H^{1}(X,F)\simeq{\rm Ext}(\mathbb{A}_{X}^{1},F)$
of $\nu:V\rightarrow X$ as an $F$-torsor is also uniquely determined. 

\end{parn}

\section{Affine-linear bundles over the affine with a double origin }

The affine line with a double origin is the scheme $\delta:\bug\rightarrow\mathbb{A}^{1}={\rm Spec}(\mathbb{C}\left[x\right])$
obtained by gluing two copies $X_{\pm}$ of the affine line $\mathbb{A}^{1}={\rm Spec}(\mathbb{C}\left[x\right])$,
with respective origins $o_{\pm}$, by the identity along the open
subsets $X_{\pm}^{*}=X_{\pm}\setminus\left\{ o_{\pm}\right\} $. It
comes equipped with a canonical covering $\mathcal{U}$ by the open
subsets $X_{+}$ and $X_{-}$. The morphism $\delta$ is induced by
the identity morphism on $\bug_{\pm}$ and restricts to an isomorphism
$\bug\setminus\left\{ o_{\pm}\right\} \simeq{\rm Spec}\left(\mathbb{C}[x^{\pm1}]\right)$. 

Since every automorphism of $\bug$ is induced by an automorphism
of $\bug_{+}\sqcup\bug_{-}$ of the form $\bug_{\pm}\ni x\mapsto ax\in\bug_{\varepsilon\cdot\pm}$,
where $a\in\mathbb{C}^{*}$ and $\varepsilon=\pm1$, the automorphism
group ${\rm Aut}\left(\bug\right)$ of $\bug$ is isomorphic to $\mathbb{G}_{m}\times\mathbb{Z}_{2}$.
In what follows, we denote by $\theta=\left(1,-1\right)\in\mathbb{G}_{m}\times\mathbb{Z}_{2}$
the automorphism which exchanges the open subsets $\bug_{\pm}$ of
$\bug$.

\subsection{Vector bundles on the affine line with a double origin. }

\begin{parn} \label{par:VectB-on-bug} Since every line bundle on
$\bug$ becomes trivial on the canonical covering $\mathcal{U}$,
the Picard group ${\rm Pic}(\bug)$ of $\bug$ is isomorphic to $\check{H}^{1}(\mathcal{U},\mathcal{O}_{\bug}^{*})\simeq\mathbb{C}[x^{\pm1}]^{*}/\mathbb{C}^{*}\simeq\mathbb{Z}$.
In what follows we fix as a generator for ${\rm Pic}\left(\bug\right)$
the class of the line bundle $p:\mathbb{L}\rightarrow\bug$ with trivializations
$\mathbb{L}\mid_{\bug_{\pm}}\simeq{\rm Spec}\left(\mathbb{C}\left[x\right][u_{\pm}]\right)$
and transition isomorphism $\tau_{\pm}:\bug_{+}^{*}\times\mathbb{A}^{1}\stackrel{\sim}{\longrightarrow}\bug_{-}^{*}\times\mathbb{A}^{1}$,
$(x,u_{+})\mapsto(x,xu_{+})$. The pull-back of $\mathbb{L}$ by the
automorphism $\theta$ of $\bug$ which exchanges the two open subsets
$\bug_{\pm}$ of $\bug$ is isomorphic to the dual $\mathbb{L}^{-1}$
of $\mathbb{L}$. A line bundle $L\rightarrow\bug$ isomorphic to
$\mathbb{L}^{k}$ for some $k\in\mathbb{Z}$ is said to be of \emph{degree}
$-k$. 

More generally, every vector bundle $E\rightarrow\bug$ of rank $r\geq2$
becomes trivial on the canonical covering $\mathcal{U}$ of $\bug$,
whence is determined up to isomorphism by the equivalence class of
a matrix $M\in{\rm GL}_{r}\left(\mathbb{C}[x^{\pm1}]\right)$ in the
double quotient $\check{H}^{1}(\mathcal{U},{\rm GL}_{r})\simeq{\rm GL}_{r}\left(\mathbb{C}\left[x\right]\right)\backslash{\rm GL}_{r}(\mathbb{C}[x^{\pm1}])/{\rm GL}_{r}\left(\mathbb{C}\left[x\right]\right)$.
Since for a suitable $n\geq0$, $E\otimes\mathbb{L}^{n}$ is determined
by a matrix $M\in\mathcal{M}_{r}\left(\mathbb{C}\left[x\right]\right)\cap{\rm GL}_{r}\left(\mathbb{C}[x^{\pm1}]\right)$
equivalent in the double quotient ${\rm GL}_{r}\left(\mathbb{C}\left[x\right]\right)\backslash\mathcal{M}_{r}\left(\mathbb{C}\left[x\right]\right)/{\rm GL}_{r}\left(\mathbb{C}\left[x\right]\right)$
to its Smith diagonal normal form, it follows that $E$ splits into
a direct sum of line bundles, i.e., is isomorphic to $\bigoplus_{i=1}^{r}\mathbb{L}^{k_{i}}$
for suitable $k_{1},\ldots,k_{r}\in\mathbb{Z}$. The Grothendieck
group $K_{0}\left(\bug\right)$ of vector bundles on $\bug$ is described
as follows: 

\end{parn}
\begin{lem}
\label{lem:K0-bug} The map $K_{0}\left(\bug\right)\rightarrow{\rm Pic}\left(\bug\right)\oplus\mathbb{Z}$,
$\left[E\right]\mapsto(\det E,{\rm rk}\left(E\right))$ is an isomorphism
of groups. \end{lem}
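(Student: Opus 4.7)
The map is a group homomorphism because the determinant is multiplicative and the rank additive on short exact sequences, and it is surjective because $(\mathbb{L}^{k},n)\in\mathrm{Pic}(\bug)\oplus\mathbb{Z}$ is the image of $[\mathbb{L}^{k}\oplus\mathcal{O}_{\bug}^{\oplus(n-1)}]$ for $n\geq1$, with formal differences handling $n\leq 0$. The real content is therefore injectivity. By the splitting theorem recalled in \ref{par:VectB-on-bug}, $K_{0}(\bug)$ is generated by the classes $[\mathbb{L}^{k}]$, $k\in\mathbb{Z}$, so it is enough to establish the closed formula
\[
[\mathbb{L}^{k}]=k[\mathbb{L}]+(1-k)[\mathcal{O}_{\bug}]\quad\text{in }K_{0}(\bug),
\]
for this identity shows that $K_{0}(\bug)$ is generated by $[\mathcal{O}_{\bug}]$ and $[\mathbb{L}]$, whose images in $\mathrm{Pic}(\bug)\oplus\mathbb{Z}\simeq\mathbb{Z}^{2}$ are visibly a $\mathbb{Z}$-basis.

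The plan is to derive this closed formula by an induction on $|k|$ from the family of three-term relations
\[
[\mathbb{L}^{k+1}]+[\mathbb{L}^{k-1}]=2[\mathbb{L}^{k}],\qquad k\in\mathbb{Z}.
\]
Since tensoring a short exact sequence of vector bundles by the line bundle $\mathbb{L}^{k}$ yields another short exact sequence, the entire family in fact follows from the single case $k=0$, namely from the existence of a non-split self-extension
\[
0\longrightarrow\mathcal{O}_{\bug}\longrightarrow E\longrightarrow\mathcal{O}_{\bug}\longrightarrow 0
\]
whose middle term is abstractly isomorphic to $\mathbb{L}^{-1}\oplus\mathbb{L}$.

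To produce such an $E$, the plan is to use the non-zero \v{C}ech cocycle $x^{-1}\in\check{H}^{1}(\mathcal{U},\mathcal{O}_{\bug})\simeq\mathbb{C}[x,x^{-1}]/\mathbb{C}[x]$, so that $E$ is glued on the canonical cover by the transition matrix $M=\left(\begin{smallmatrix}1 & x^{-1}\\0 & 1\end{smallmatrix}\right)$. The main—and essentially only non-formal—step is then the identification of the splitting type of this $E$: multiplying $M$ by $x$ to land in $M_{2}(\mathbb{C}[x])$ yields $xM=\left(\begin{smallmatrix}x & 1\\0 & x\end{smallmatrix}\right)$, whose Smith normal form over the PID $\mathbb{C}[x]$ is $\mathrm{diag}(1,x^{2})$. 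Reading this back in the double coset space $\mathrm{GL}_{2}(\mathbb{C}[x])\backslash\mathrm{GL}_{2}(\mathbb{C}[x,x^{-1}])/\mathrm{GL}_{2}(\mathbb{C}[x])$ and undoing the twist by $\mathbb{L}$ shows that $M$ is equivalent to $\mathrm{diag}(x^{-1},x)$, whence $E\simeq\mathbb{L}^{-1}\oplus\mathbb{L}$, which is the required base case.
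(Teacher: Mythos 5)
Your proof is correct, but it takes a genuinely different route from the paper's. The paper proves injectivity by induction on the rank, reducing to the identity $[\mathbb{L}^{m}\oplus\mathbb{L}^{n}]=[\mathbb{L}^{m+n}\oplus\mathbb{A}_{\bug}^{1}]$ in $K_{0}(\bug)$ for all $m,n$, which it establishes by a case distinction ($0<m\leq n$ versus $m<0<n$) with two explicit matrix factorizations exhibiting the relevant extensions. You instead derive everything from a single base case: the extension $0\rightarrow\mathcal{O}_{\bug}\rightarrow E\rightarrow\mathcal{O}_{\bug}\rightarrow0$ glued by $\left(\begin{smallmatrix}1 & x^{-1}\\ 0 & 1\end{smallmatrix}\right)$ with $E\simeq\mathbb{L}^{-1}\oplus\mathbb{L}$ --- which is in fact exactly the instance $(m,n)=(-1,1)$ of the paper's second family of matrices --- and then propagate it by the formal observation that tensoring a short exact sequence with $\mathbb{L}^{k}$ preserves exactness, obtaining the three-term recurrence $[\mathbb{L}^{k+1}]+[\mathbb{L}^{k-1}]=2[\mathbb{L}^{k}]$ whose solution is the closed formula $[\mathbb{L}^{k}]=k[\mathbb{L}]+(1-k)[\mathcal{O}_{\bug}]$. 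Your Smith normal form identification of the splitting type of $E$ is valid (clearing denominators to $\left(\begin{smallmatrix}x & 1\\ 0 & x\end{smallmatrix}\right)$, whose invariant factors are $1$ and $x^{2}$, then untwisting), and the generation of $K_{0}(\bug)$ by $[\mathcal{O}_{\bug}]$ and $[\mathbb{L}]$, whose images form a basis of $\mathbb{Z}^{2}$, does give injectivity. Your argument is more economical --- one matrix computation and no case analysis --- whereas the paper's case-by-case construction has the side benefit that the same two matrices $M$ and $N$, upgraded to torsor gluings, are reused verbatim in the geometric argument of Proposition \ref{prop:Aff-bug-K0} to prove $S(m)\times_{\bug}S(n)\simeq S(m+n)\times_{\bug}\mathbb{A}_{\bug}^{1}$, so the paper's extra work is an investment for the sequel rather than a necessity for the lemma.
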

\begin{proof}
The only non trivial part is to show that this map is injective, or
equivalently, that if $E\rightarrow\bug$ is a vector bundle of rank
$r\geq2$ then $E$ and $\det\left(E\right)\oplus\mathbb{A}_{\bug}^{r-1}$
have the same class in $K_{0}\left(\bug\right)$. We proceed by induction
on $r\geq2$. Since every vector bundle on $\bug$ is decomposable,
we may assume that $E=\mathbb{L}^{m}\oplus E'$ where $k\in\mathbb{Z}$
and $E'$ is a vector bundle of rank $r-1$. By induction hypothesis,
$E'$ has the same class in $K_{0}\left(\bug\right)$ as $\det\left(E'\right)=\mathbb{L}^{n}\oplus\mathbb{A}_{\bug}^{r-2}$
where $n=\deg\left(E'\right)$, and so it is enough to check that
for every $m,n\in\mathbb{Z}$, $\mathbb{L}^{m}\oplus\mathbb{L}^{n}$
and $\mathbb{L}^{m+n}\oplus\mathbb{A}_{\bug}^{1}$ have the same class
in $K_{0}\left(\bug\right)$. If either $m$ or $n$ is equal to zero
then we are done. Otherwise, up to changing $\mathbb{L}^{m}\oplus\mathbb{L}^{n}$
for its dual and exchanging the roles of $m$ and $n$, we may assume
that either $0<m\leq n$ or $m<0<n$. In the first case, the matrix 

\[
M=\left(\begin{array}{cc}
x^{m} & 1\\
0 & x^{n}
\end{array}\right)=\left(\begin{array}{cc}
0 & 1\\
-1 & x^{n-1}
\end{array}\right)\left(\begin{array}{cc}
x^{m+n} & 0\\
0 & 1
\end{array}\right)\left(\begin{array}{cc}
1 & 0\\
x^{m-1} & 1
\end{array}\right)\in{\rm GL}_{2}\left(\mathbb{C}\left[x^{\pm1}\right]\right)
\]
is equivalent in $\check{H}^{1}(\mathcal{U},{\rm GL}_{2})$ to ${\rm diag}\left(x^{m+n},1\right)$
and defines an extension $0\rightarrow\mathbb{L}^{m}\rightarrow\mathbb{L}^{m+n}\oplus\mathbb{A}_{\bug}^{1}\rightarrow\mathbb{L}^{n}\rightarrow0$.
Hence $\mathbb{L}^{m}\oplus\mathbb{L}^{n}$ and $\mathbb{L}^{m+n}\oplus\mathbb{A}_{\bug}^{1}$
have the same class in $K_{0}\left(\bug\right)$. The second case
follows from the same argument using the fact that the matrix 
\[
N=\left(\begin{array}{cc}
1 & x^{m}\\
0 & x^{m+n}
\end{array}\right)=\left(\begin{array}{cc}
1 & 0\\
x^{n} & 1
\end{array}\right)\left(\begin{array}{cc}
x^{m} & 0\\
0 & x^{n}
\end{array}\right)\left(\begin{array}{cc}
x^{-m} & 1\\
-1 & 0
\end{array}\right)\in{\rm GL}_{2}\left(\mathbb{C}\left[x^{\pm1}\right]\right)
\]
is equivalent in $\check{H}^{1}(\mathcal{U},{\rm GL}_{2})$ to ${\rm diag}\left(x^{m},x^{n}\right)$
and defines an extension $0\rightarrow\mathbb{A}_{\bug}^{1}\rightarrow\mathbb{L}^{m}\oplus\mathbb{L}^{n}\rightarrow\mathbb{L}^{m+n}\rightarrow0$. 
\end{proof}

\subsection{Affine-linear bundles of rank one}

\indent\newline\noindent Here we review the classification of affine-linear
bundle of rank one over $\bug$ following \cite{DubG03,Fieseler1994}. 

\begin{parn} \label{Par:Rank-one-bug} In view of the above description
of ${\rm Pic}(\bug)$,  every affine-linear bundle $\rho:S\rightarrow\bug$
of rank one over $\bug$ is an $\mathbb{L}^{k}$-torsor for a certain
$k\in\mathbb{Z}$. We deduce from the isomorphism $H^{1}(\bug,\mathbb{L}^{k})\simeq\check{H}^{1}(\mathcal{U},\mathbb{L}^{k})\simeq\mathbb{C}\left[x^{\pm1}\right]/\langle x^{k}\mathbb{C}\left[x\right]+\mathbb{C}\left[x\right]\rangle$
that every nontrivial $\mathbb{L}^{k}$-torsor $\rho:S\rightarrow\bug$
is isomorphic to a one obtain by gluing $\bug_{+}\times\mathbb{A}^{1}$
and $\bug_{-}\times\mathbb{A}^{1}$ over $\bug_{+}\cap\bug_{-}$ by
an isomorphism of the form $(x,u_{+})\mapsto(x,x^{k}u_{+}+g\left(x\right))$
for a Laurent polynomial $g\left(x\right)\in\mathbb{C}\left[x^{\pm1}\right]$
with non zero residue class in $\mathbb{C}\left[x^{\pm1}\right]/\langle x^{k}\mathbb{C}\left[x\right]+\mathbb{C}\left[x\right]\rangle$.
This implies in turn that the total space of a nontrivial $\mathbb{L}^{k}$-torsor
is an affine surface. Indeed, writing $g=x^{-l}h\left(x\right)$ where
$h\in\mathbb{C}\left[x\right]\setminus x\mathbb{C}\left[x\right]$
and $l>\min\left(0,-k\right)$, the local regular functions 
\[
\varphi_{+}=x^{k+l}u_{+}+h\left(x\right)\in\Gamma(S\mid_{\bug_{+}},\mathcal{O}_{S})\qquad\textrm{and}\qquad\varphi_{-}=x^{l}u_{-}\in\Gamma(S\mid_{\bug_{-}},\mathcal{O}_{S})
\]
glue to a global one $\varphi\in\Gamma(S,\mathcal{O}_{S})$ for which
the morphism $\pi=\left(\delta\circ\rho,\varphi\right):S\rightarrow\mathbb{A}^{2}={\rm Spec}\left(\mathbb{C}\left[x,y\right]\right)$
maps the fibers $\rho^{-1}\left(o_{\pm}\right)$ to the distinct points
$\left(0,h\left(0\right)\right)$ and $\left(0,0\right)$ respectively
and restricts to an isomorphism $S\setminus\rho^{-1}\left(\left\{ o_{\pm}\right\} \right)\simeq{\rm Spec}\left(\mathbb{C}\left[x^{\pm1},y\right]\right)$.
Since the inverse images by $\pi$ of the principal affine open subsets
$y\neq h\left(0\right)$ and $y\neq0$ of $\mathbb{A}^{2}$ are principal
open subsets of $S\setminus\rho^{-1}\left(o_{+}\right)\simeq\mathbb{A}^{2}$
and $S\setminus\rho^{-1}\left(o_{-}\right)\simeq\mathbb{A}^{2}$ respectively,
it follows that $\pi:S\rightarrow\mathbb{A}^{2}$ is an affine morphism
whence that $S$ is an affine scheme. Note that conversely the total
space of a trivial $\mathbb{L}^{k}$-torsor cannot be affine since
it is not even separated. 

\end{parn}
\begin{example}
\label{exa:Sd_surfaces} For every $d\in\mathbb{Z}$, we let $\zeta_{d}:S\left(d\right)\rightarrow\bug$
be the nontrivial $\mathbb{L}^{d}$-torsor with gluing isomorphism
\[
\bug_{+}\times\mathbb{A}^{1}\supset\bug_{+}^{*}\times\mathbb{A}^{1}\stackrel{\sim}{\rightarrow}\bug_{-}^{*}\times\mathbb{A}^{1}\subset\bug_{-}\times\mathbb{A}^{1},\;\left(x,u_{+}\right)\mapsto\left(x,x^{d}u_{+}+x^{\min\left(-1,d-1\right)}\right).
\]
One checks easily that $\zeta_{-d}:S\left(-d\right)\rightarrow\bug$
is isomorphic to the pull-back $S\left(d\right)\times_{\bug}\bug$
of $\zeta_{d}:S\left(d\right)\rightarrow\bug$ by the automorphism
$\theta$ of $\bug$ which exchanges the open subsets $\bug_{\pm}$.
Since $\Omega_{\bug}^{1}$ is trivial it follows that $K_{S\left(d\right)}\simeq\Omega_{S(d)/\bug}^{1}\simeq\zeta_{d}^{*}\mathbb{L}^{-d}$
whence that ${\rm Pic}(S(d))/\langle K_{S(d)}\rangle\simeq\mathbb{Z}/\left|d\right|\mathbb{Z}$.
Therefore, the surfaces $S(d)$ are pairwise non isomorphic as schemes
over $\bug$ while $S(d)$ and $S(d')$ are isomorphic as abstract
schemes if and only if $d=\pm d'$. Note that since ${\rm Pic}\left(S(d)\right)\simeq{\rm Pic}(S(d)\times_{\bug}\mathbb{A}_{\bug}^{r})$
for every $r\geq1$, the same argument shows more generally that $S(d)\times_{\bug}\mathbb{A}_{\bug}^{r}$
is isomorphic to $S(d')\times_{\bug}\mathbb{A}_{\bug}^{r}$ as a scheme
over $\bug$ if and only if $d=d'$, and as an abstract scheme if
and only if $d=\pm d'$. 

If $d\geq0$ then, letting $\varphi\in\Gamma(S\left(d\right),\mathcal{O}_{S\left(d\right)})$
be defined locally by $(\varphi_{+},\varphi_{-})=(x^{d+1}u_{+}+1,xu_{-})$
as in \ref{Par:Rank-one-bug}, one checks that the rational functions
$\psi=x^{-1}\varphi\left(\varphi-1\right)$ and $\xi=x^{-d}\varphi^{d}\psi$
on $S\left(d\right)$ are regular and that the morphism $\left(\delta\circ\zeta_{d},\varphi,\psi,\xi\right):S\left(d\right)\rightarrow\mathbb{A}^{4}={\rm Spec}\left(\mathbb{C}\left[x,y,z,u\right]\right)$
is a closed embedding of $S\left(d\right)$ as the surface defined
by the equations 
\[
xz=y\left(y-1\right),\;\left(y-1\right)^{d}u=z^{d+1},\; x^{d}u=y^{d}z.
\]

\end{example}
The following result shows that for a non trivial affine-linear bundle
of rank one $\rho:S\rightarrow\bug$, the isomorphy type of $S$ as
an abstract scheme is essentially uniquely determined by its one as
an affine-linear bundle over $\bug$: 
\begin{thm}
Two non trivial affine-linear bundles or rank one $\rho_{i}:S_{i}\rightarrow\bug$,
$i=1,2$, have isomorphic total spaces if and only if their isomorphy
classes in $H^{1}(\bug,{\rm Aff}_{1})$ belong to the same orbit of
the action of ${\rm Aut}\left(\bug\right)$. \end{thm}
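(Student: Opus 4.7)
The ``if'' direction is routine: when $\sigma\in{\rm Aut}(\bug)$ satisfies $\sigma^{*}[\rho_{2}]=[\rho_{1}]$ in $H^{1}(\bug,{\rm Aff}_{1})$, one gets $S_{1}\simeq\sigma^{*}S_{2}=S_{2}\times_{\sigma,\bug}\bug$ as $\bug$-schemes, and the base-change projection $\sigma^{*}S_{2}\to S_{2}$ is an isomorphism of abstract schemes.

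For the converse, suppose $\Phi\colon S_{1}\to S_{2}$ is an abstract isomorphism. The plan is to first produce $\sigma\in{\rm Aut}(\bug)$ with $\sigma\circ\rho_{1}=\rho_{2}\circ\Phi$, and then to deduce that $\Phi$ is an isomorphism of affine-linear bundles between $\rho_{1}\colon S_{1}\to\bug$ and $\sigma^{*}\rho_{2}\colon\sigma^{*}S_{2}\to\bug$. The key point is the uniqueness, modulo ${\rm Aut}(\bug)$, of the $\mathbb{A}^{1}$-fibration $\rho_{i}$ on $S_{i}$, which I intend to derive from the essential uniqueness of the underlying $\mathbb{G}_{a}$-action on $S_{i}$. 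Using the explicit affine model of $S_{i}$ as a closed subvariety of $\mathbb{A}^{2}$ (or of $\mathbb{A}^{4}$) described in \ref{Par:Rank-one-bug} and Example \ref{exa:Sd_surfaces}, I would compute the Makar--Limanov invariant ${\rm ML}(S_{i})=\bigcap_{\partial}\ker\partial$, the intersection being taken over all locally nilpotent derivations of $\Gamma(S_{i},\mathcal{O}_{S_{i}})$, and show that it coincides with $\rho_{i}^{*}\Gamma(\bug,\mathcal{O}_{\bug})=\mathbb{C}[x]$. Then $\Phi^{*}$ restricts to an isomorphism of Makar--Limanov invariants, inducing an automorphism $\overline{\sigma}$ of $\mathbb{A}^{1}={\rm Spec}\,{\rm ML}(S_{i})$ with $\overline{\sigma}\circ\delta\circ\rho_{1}=\delta\circ\rho_{2}\circ\Phi$.

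To lift $\overline{\sigma}$ to an automorphism $\sigma$ of $\bug$, I would use that the fibre of $\delta\circ\rho_{i}$ over $0\in\mathbb{A}^{1}$ is the unique non-integral fibre, and that its two irreducible components are exactly $\rho_{i}^{-1}(o_{+})$ and $\rho_{i}^{-1}(o_{-})$. This forces $\overline{\sigma}$ to fix $0$, so $\overline{\sigma}\in\mathbb{G}_{m}\subset{\rm Aut}(\mathbb{A}^{1})$; and $\Phi$ either preserves or exchanges the two components, which determines a unique lift $\sigma\in{\rm Aut}(\bug)=\mathbb{G}_{m}\times\mathbb{Z}_{2}$ satisfying $\sigma\circ\rho_{1}=\rho_{2}\circ\Phi$. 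The pair $(\Phi,\rho_{1})$ then defines an isomorphism $S_{1}\to S_{2}\times_{\sigma,\bug}\bug=\sigma^{*}S_{2}$ of $\bug$-schemes. To conclude I would observe that any isomorphism of rank-one affine-linear bundles over $\bug$ as $\bug$-schemes is automatically an isomorphism of affine-linear bundles: restricted to each piece $\bug_{\pm}\cong\mathbb{A}^{1}$ of the canonical cover $\mathcal{U}$, it is a morphism of trivial $\mathbb{A}^{1}$-bundles over an affine base, hence of the form $u\mapsto a(x)u+b(x)$ with $a\in\mathbb{C}[x]^{*}$ and $b\in\mathbb{C}[x]$. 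This yields $[\rho_{1}]=\sigma^{*}[\rho_{2}]$ in $H^{1}(\bug,{\rm Aff}_{1})$.

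The principal obstacle is the Makar--Limanov computation: one must classify the locally nilpotent derivations of $\Gamma(S_{i},\mathcal{O}_{S_{i}})$ and show they all annihilate the pulled-back coordinate $x$. This should be accessible from the explicit equations for $S_{i}$ recorded in Example \ref{exa:Sd_surfaces}, but is the technical heart of the argument.
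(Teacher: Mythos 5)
Your ``if'' direction and the formal steps of the converse (lifting $\overline{\sigma}$ to ${\rm Aut}(\bug)$ via the two irreducible components of the fibre over $0$, and the observation that any $\bug$-scheme isomorphism of rank-one affine-linear bundles is automatically affine-linear on the charts of $\mathcal{U}$) are all correct, and in the case where the $\mathbb{A}^{1}$-fibration is unique your Makar--Limanov strategy is a legitimate way to make the descent rigorous. But the central claim on which your whole converse rests --- that ${\rm ML}(S_{i})=\rho_{i}^{*}\Gamma(\bug,\mathcal{O}_{\bug})=\mathbb{C}[x]$ for \emph{every} non trivial rank-one affine-linear bundle --- is false, and it fails precisely on the explicit models of Example \ref{exa:Sd_surfaces} from which you propose to compute it. For $d=0$ the surface $S(0)$ is the Danielewski surface $xz=y(y-1)$ in $\mathbb{A}^{3}$, whose involution $(x,y,z)\mapsto(z,y,x)$ yields a second $\mathbb{G}_{a}$-action with distinct general orbits, so ${\rm ML}(S(0))=\mathbb{C}$; more generally the surfaces $S(\pm d)$ admit at least two affine-linear bundle structures over $\bug$ with distinct general fibers. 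For such surfaces an abstract isomorphism $\Phi:S_{1}\rightarrow S_{2}$ need not intertwine the chosen fibrations up to an automorphism of $\bug$, and your argument then gives no way to conclude.

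This is exactly the dichotomy around which the paper's proof is organized: when $S_{1}$ or $S_{2}$ admits a unique affine-linear bundle structure up to composition by ${\rm Aut}(\bug)$, the isomorphism descends essentially as you describe; when both admit at least two structures with distinct general fibers, the paper instead invokes the classification of Danielewski--Fieseler surfaces (Theorems 3.11 and 5.3 of \cite{DubG03}) to show that each $\rho_{i}$ is then isomorphic \emph{as an affine-linear bundle} either to $\zeta_{0}$ (when $K_{S_{i}}$ is trivial) or to one of $\zeta_{\pm d}$ with $d={\rm ord}({\rm Pic}(S_{i})/\langle K_{S_{i}}\rangle)$, and concludes because $\zeta_{d}$ and $\zeta_{-d}$ are exchanged by the base change $\theta$, so their classes lie in a common ${\rm Aut}(\bug)$-orbit whether or not $\Phi$ respects the fibrations. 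To repair your proof you would need a substitute for this second case --- either the cited classification or a direct argument that any two nontrivial bundles carrying several fibrations and having the same order of ${\rm Pic}/\langle K\rangle$ are bundle-isomorphic to the same pair of models --- and this cannot be extracted from the Makar--Limanov computation, which in this regime only returns $\mathbb{C}$ and thus loses the fibration entirely.
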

\begin{proof}
The condition is clearly sufficient. Conversely, if either $S_{1}$
or $S_{2}$ admits a unique affine-linear bundle structure over $\bug$
up to composition by automorphisms of $\bug$ then both admit a unique
such structure and so every isomorphism $\Phi:S_{2}\stackrel{\sim}{\rightarrow}S_{1}$
descends to an automorphism $\varphi$ of $\bug$ such that $\rho_{1}\circ\Phi=\varphi\circ\rho_{2}$.
This implies in turn that $\Phi$ factors through an isomorphism of
affine-linear bundles $\Phi':S_{2}\rightarrow S_{1}\times_{\bug}\bug$
whence that the isomorphy classes in $H^{1}(\bug,{\rm Aff}_{1})$
of the ${\rm Aff}_{1}$-bundles associated to $S_{1}$ and $S_{2}$
belong to a same orbit of ${\rm Aut}\left(\bug\right)$. Otherwise,
if $S_{1}$ and $S_{2}$ both admit at least two affine-linear bundle
structures over $\bug$ with distinct general fibers then, by combining
Theorem 3.11 and 5.3 in \cite{DubG03}, we obtain the following :
if the canonical bundles $K_{S_{1}}$ and $K_{S_{2}}$ are both trivial
then $\rho_{i}:S_{i}\rightarrow\bug$, $i=1,2$, are both isomorphic
to $\zeta_{0}:S\left(0\right)\rightarrow\bug$. Otherwise, for $d={\rm ord}({\rm Pic}(S_{1})/\langle K_{S_{1}}\rangle)={\rm ord}({\rm Pic}(S_{2})/\langle K_{S_{2}}\rangle)$,
$\rho_{i}:S_{i}\rightarrow\bug$ is isomorphic as an affine-linear
bundle either to the one $\zeta_{d}:S\left(d\right)\rightarrow\bug$
or to the one $\zeta_{-d}:S\left(-d\right)\rightarrow\bug$. This
completes the proof since the latters are obtained from each others
via the base change by the automorphism $\theta$ of $\bug$ (see
Example \ref{exa:Sd_surfaces} above). 
\end{proof}

\subsection{Affine-linear bundles of higher ranks}

\indent\newline\noindent In this subsection, we consider affine-linear
bundles $\rho:V\rightarrow\bug$ of rank $r\geq2$. Recall that to
every such bundle is associated a vector bundle $E\rightarrow\bug$
unique up to isomorphism for which $\rho:V\rightarrow\bug$ inherits
the structure of an $E$-torsor. In contrast with the case of affine-linear
bundles of rank one, we have the following characterization: 
\begin{thm}
\label{thm:AffB-on-bug} Let $p_{i}:E_{i}\rightarrow\bug$, $i=1,2$
be vector bundles of the same rank $r\geq2$ and let $\rho_{i}:V_{i}\rightarrow\bug$
be non trivial $E_{i}$-torsors, $i=1,2$. Then the following holds: 

1) $V_{1}$ and $V_{2}$ are isomorphic as schemes over $\bug$ if
and only if $\deg(E_{1})=\deg(E_{2})$,

2) $V_{1}$ and $V_{2}$ are isomorphic as abstract schemes if and
only if $\deg(E_{1})=\pm\deg(E_{2})$. 
\end{thm}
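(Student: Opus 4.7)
For any $E$-torsor $\rho\colon V\to\bug$ of rank $r\geq 2$, the fibres of $\rho$ are affine spaces, so pull-back gives an isomorphism $\rho^*\colon\mathrm{Pic}(\bug)\stackrel{\sim}{\rightarrow}\mathrm{Pic}(V)$; in particular $\mathrm{Pic}(V)\simeq\mathbb Z$. Since $\bug$ is obtained by gluing two copies of $\mathbb A^1$ along the identity, $\Omega^1_{\bug}$ is trivial, and the relative cotangent sequence combined with $\Omega^1_{V/\bug}\simeq\rho^*E^\vee$ gives
\[
K_V\simeq\rho^*\det E^\vee\simeq\rho^*\mathbb L^{-\deg E}.
\]
Under the identification $\mathrm{Pic}(V)\simeq\mathbb Z$ the canonical class thus corresponds to $-\deg E$. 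A $\bug$-isomorphism $V_1\simeq V_2$ respects the identifications $\rho_i^*$ and forces $\deg E_1=\deg E_2$; an abstract isomorphism can change this identification only by a sign, so forces $|\deg E_1|=|\deg E_2|$.

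\textbf{Sufficiency: reduction to a standard torsor.} The goal is to show that every non-trivial $E$-torsor $V$ with $\deg E=d$ is $\bug$-isomorphic to $S(d)\times_{\bug}\mathbb A_{\bug}^{r-1}$. Choose a splitting $E=\bigoplus_{i=1}^r\mathbb L^{k_i}$; the torsor class is an element of $\bigoplus H^1(\bug,\mathbb L^{k_i})$, and after reordering I may assume that its component on the first summand $L=\mathbb L^{k_1}$ is non-zero. Writing $T=\bigoplus_{i\geq 2}\mathbb L^{k_i}$, the quotient $V/T$ is then a non-trivial $L$-torsor, hence $\bug$-isomorphic to $S(k_1)$, which by Subsection 2.2 is affine. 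The morphism $V\to V/T$ is a torsor under the pull-back $q^*T$ of a coherent sheaf to the affine base $V/T$, so $H^1(V/T,q^*T)=0$, this torsor is trivial, and $V\simeq S(k_1)\times_{\bug}T$ as $\bug$-schemes.

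\textbf{Reshuffling and the abstract case.} What remains is to produce, for every decomposition with $\sum k_i=d$, an isomorphism $S(k_1)\times_{\bug}\bigoplus_{i\geq 2}\mathbb L^{k_i}\simeq S(d)\times_{\bug}\mathbb A_{\bug}^{r-1}$ of $\bug$-schemes. By induction on $r$ it suffices to establish the rank-two substitution
\[
S(k)\times_{\bug}\mathbb L^m\simeq S(k+m)\times_{\bug}\mathbb A_{\bug}^1,
\]
which I plan to construct by realising both sides as complements of distinct hyperplane sub-bundles of a common $\mathbb P^2$-bundle $\mathbb P(\widetilde E)$ over $\bug$ associated to the extension
\[
0\to\mathbb L^k\to\mathbb L^{k+m}\oplus\mathbb A_{\bug}^1\to\mathbb L^m\to 0
\]
furnished by the matrix identity in the proof of Lemma \ref{lem:K0-bug}, and then exchanging the two complements by an elementary transformation centred along the intersection of the two sub-bundles. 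For part (2), given $\deg E_1=-\deg E_2$, pulling $V_1$ back through the involution $\theta$ produces a non-trivial torsor under $\theta^*E_1$ of degree $\deg E_2$ which, by (1), is $\bug$-isomorphic to $V_2$; composition with $\theta\colon V_1\to\theta^*V_1$ then yields the sought abstract isomorphism.

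\textbf{Main obstacle.} The delicate step is the rank-two substitution: its two sides are torsors under the genuinely non-isomorphic vector bundles $\mathbb L^k\oplus\mathbb L^m$ and $\mathbb L^{k+m}\oplus\mathbb A_{\bug}^1$, so no isomorphism between them can be affine-linear on either trivialising chart. The extension in Lemma \ref{lem:K0-bug} is the precise algebraic input ensuring that the elementary transformation between the two projective completions restricts to a biregular, not merely birational, isomorphism of the affine complements.
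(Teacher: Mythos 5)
Your necessity argument and your reduction of $V$ to a product over the rank-one quotient follow the paper's strategy, but two steps do not hold up. The first is your assertion that the quotient $V/T$, being a non-trivial $\mathbb{L}^{k_1}$-torsor, is ``$\bug$-isomorphic to $S(k_1)$''. This is false: non-trivial $\mathbb{L}^{k}$-torsors over $\bug$ have \emph{infinite moduli}, even up to abstract isomorphism of their total spaces. Their classes live in the infinite-dimensional space $H^{1}(\bug,\mathbb{L}^{k})\simeq\mathbb{C}[x^{\pm1}]/\langle x^{k}\mathbb{C}[x]+\mathbb{C}[x]\rangle$, and the rank-one classification theorem of Section 2.2 only identifies total spaces whose classes lie in the same orbit of the (small) groups acting; the paper's introduction states this moduli phenomenon explicitly, and \ref{Par:Rank-one-bug} gives you only that $V/T$ is \emph{affine}. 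Your conclusion $V\simeq S(k_1)\times_{\bug}T$ is nevertheless true, but proving it is precisely the content of Lemma \ref{lem:TorsB-on-bug}, whose key device you are missing: for an arbitrary non-trivial $\mathbb{L}^{k_1}$-torsor $S_{1}$ and any $k$, the mixed fiber product $S_{1}\times_{\bug}S(k)$ is a torsor over each of the two affine factors, hence trivial both ways, which yields $S_{1}\times_{\bug}\mathbb{L}^{k}\simeq\mathbb{L}^{k_1}\times_{\bug}S(k)\simeq S(k_1)\times_{\bug}\mathbb{L}^{k}$ and erases the dependence on the particular torsor class. Note that this exchange trick is exactly where the hypothesis $r\geq2$ enters (a companion factor must be available to absorb the rank-one moduli), which is why the theorem fails in rank one.

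The second gap is that the crux of the sufficiency part, the substitution $S(k)\times_{\bug}\mathbb{L}^{m}\simeq S(k+m)\times_{\bug}\mathbb{A}_{\bug}^{1}$, is announced rather than proved: you ``plan to construct'' it by an elementary transformation between two hyperplane sub-bundles of a common $\mathbb{P}^{2}$-bundle $\mathbb{P}(\widetilde{E})$, but you never exhibit $\widetilde{E}$ (the two rank-three extensions of $\mathbb{A}_{\bug}^{1}$ by $\mathbb{L}^{k}\oplus\mathbb{L}^{m}$ and by $\mathbb{L}^{k+m}\oplus\mathbb{A}_{\bug}^{1}$ would have to be shown isomorphic with compatible, non-zero extension classes), you do not verify that the transformation restricts to a biregular map of the affine complements, and you do not identify which torsor comes out — so you would in any case need the full strength of Lemma \ref{lem:TorsB-on-bug} to conclude. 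The paper's proof of Proposition \ref{prop:Aff-bug-K0} avoids all of this: it takes the explicit non-trivial torsor $V$ under $E\simeq\mathbb{L}^{m+n}\oplus\mathbb{A}_{\bug}^{1}$ glued by $(x,(v_{+},u_{+}))\mapsto(x,(x^{m}v_{+}+u_{+},x^{n}u_{+}+x^{-1}))$, built from the matrix identity of Lemma \ref{lem:K0-bug}; Lemma \ref{lem:TorsB-on-bug} identifies its total space with $S(m+n)\times_{\bug}\mathbb{A}_{\bug}^{1}$, while the quotient by the sub-bundle $\mathbb{L}^{m}$ exhibits $V$ as a trivial torsor over the affine surface $S(n)$, whence $V\simeq S(n)\times_{\bug}\mathbb{L}^{m}\simeq S(n)\times_{\bug}S(m)$; a second matrix handles the case $m<0<n$, a sign pattern your sketch also leaves untreated. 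Your necessity argument and your derivation of part (2) from part (1) via the involution $\theta$ do agree with the paper (compare Example \ref{exa:Sd_surfaces}), up to a harmless clash with the paper's convention that $\mathbb{L}^{k}$ has degree $-k$.
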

\begin{parn} Theorem \ref{thm:AffB-on-bug} is a consequence of Lemmas
\ref{lem:TorsB-on-bug} and Proposition \ref{prop:Aff-bug-K0} below
which, combined with Lemma \ref{lem:K0-bug}, imply that the total
space of non trivial $E$-torsor $\rho:V\rightarrow\bug$ or rank
$r\geq2$ is isomorphic as a scheme over $\bug$ to $S\left(d\right)\times_{\bug}\mathbb{A}_{\bug}^{r-1}$,
where $d=-\deg\left(E\right)$ and where $\zeta_{d}:S(d)\rightarrow\bug$
is the non trivial $\mathbb{L}^{d}$-torsor defined in Example \ref{exa:Sd_surfaces}
above. 

\end{parn}
\begin{lem}
\label{lem:TorsB-on-bug} The total spaces of all non trivial torsors
under a fixed vector bundle $p:E\rightarrow\bug$ of rank $r\geq2$
are affine and isomorphic as schemes over $\bug$. \end{lem}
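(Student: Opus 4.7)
The argument proceeds in two stages: first establishing that $V$ is affine, then showing all non trivial $E$-torsors are isomorphic as $\bug$-schemes.

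For the affinity, I rely on the splitting of vector bundles on $\bug$ recalled in \ref{par:VectB-on-bug}: write $E\simeq\bigoplus_{i=1}^r\mathbb{L}^{k_i}$, whence $H^1(\bug,E)\simeq\bigoplus_iH^1(\bug,\mathbb{L}^{k_i})$ and the class of $V$ decomposes as a tuple $(g_1,\ldots,g_r)$ with at least one non zero entry, which we may take to be $g_1$ after relabeling the summands. Writing $g_1=x^{-l}h(x)$ with $h\in\mathbb{C}[x]\setminus x\mathbb{C}[x]$ and $l>\max(0,-k_1)$, the construction used in \ref{Par:Rank-one-bug} yields a regular function $\varphi\in\Gamma(V,\mathcal{O}_V)$ defined locally by $\varphi_+=x^{k_1+l}u_1^++h(x)$ on $V|_{\bug_+}$ and $\varphi_-=x^lu_1^-$ on $V|_{\bug_-}$; it separates the fibers $\rho^{-1}(o_\pm)$ by taking on them the distinct values $h(0)$ and $0$. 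The morphism $(\delta\circ\rho,\varphi):V\to\mathbb{A}^2$ is then affine by the same principal-open argument as in the rank one case, so $V$ is affine.

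For the isomorphy, given two non trivial $E$-torsors $V,V'$, I use the splitting $E=\mathbb{L}^{k_1}\oplus E_2$ with $E_2=\bigoplus_{i\geq2}\mathbb{L}^{k_i}$ to decompose, as $\bug$-schemes, $V\simeq V_1\times_\bug V_2$ where $V_i$ is the torsor associated to the respective cocycle component. When the first component is non zero, which can be arranged after an $\mathrm{Aut}(E)$-equivalence using the lower-triangular action of morphisms $\mathbb{L}^{k_1}\to\mathbb{L}^{k_j}$ (i.e. sections of $\mathbb{L}^{k_j-k_1}$) to kill the higher components, $V_1$ is a non trivial rank one torsor, hence affine by step one, and then the $E_2$-torsor $V\to V_1$ obtained by base change trivializes over the affine scheme $V_1$, yielding $V\simeq V_1\times_\bug E_2$ as $\bug$-schemes. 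Analogously $V'\simeq V_1'\times_\bug E_2$ for some other non trivial rank one $\mathbb{L}^{k_1}$-torsor $V_1'$. The lemma thus reduces to showing that $V_1\times_\bug E_2\simeq V_1'\times_\bug E_2$ as $\bug$-schemes for any two non trivial rank one $\mathbb{L}^{k_1}$-torsors $V_1,V_1'$.

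The main obstacle is this final cancellation-type claim, where the hypothesis $r\geq2$, i.e. $E_2\neq0$, is essential: by the rank one theorem the surfaces $V_1,V_1'$ are in general not isomorphic even as abstract schemes when their cocycle classes lie in distinct $\mathrm{Aut}(\bug)$-orbits, so the desired isomorphism of products cannot factor through one on the first factor. My plan is to exhibit it explicitly on the canonical cover $\{X_+,X_-\}$, as polynomial (non affine) $\mathbb{C}[x]$-algebra automorphisms of $\mathbb{C}[x][u_1,\ldots,u_r]$ on each chart whose composition with the cocycle of $V$ over the overlap $\mathrm{Spec}\,\mathbb{C}[x^{\pm1}]$ yields the cocycle of $V'$. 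Intuitively, the cohomological difference $g_1-g_1'\in\mathbb{C}[x^{\pm1}]$, which fails to be a coboundary in $H^1(\bug,\mathbb{L}^{k_1})$, should become a non affine coboundary once we permit polynomial shifts involving the extra $E_2$-fiber variables $u_2,\ldots,u_r$. Implementing this explicit construction, which would likely exploit the regular function $\varphi$ produced in step one and the additional pole orders available from the rank $r-1\geq1$ complement, is the key technical content of the lemma.
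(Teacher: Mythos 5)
Your first stage is correct, and it is in fact a slightly different route from the paper's: you build the regular function $\varphi$ directly from the distinguished fiber coordinate $u_{1}$ and run the rank-one principal-open argument of \ref{Par:Rank-one-bug} on $(\delta\circ\rho,\varphi):V\rightarrow\mathbb{A}^{2}$ (your normalization $l>\max(0,-k_{1})$ is indeed the condition needed for $\varphi$ to be constant on both fibers $\rho^{-1}(o_{\pm})$ with distinct values), whereas the paper passes to the quotient $S_{1}=V/E_{1}$ by the complementary summand $E_{1}=\bigoplus_{i\geq2}\mathbb{L}^{k_{i}}$, a non trivial $\mathbb{L}^{k_{1}}$-torsor, hence affine, and then notes that the $\rho_{1}^{*}E_{1}$-torsor $V\rightarrow S_{1}$ trivializes over the affine base, so $V\simeq S_{1}\times_{\bug}E_{1}$ is affine. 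Both are sound. Your reduction of the isomorphy statement to the cancellation claim $V_{1}\times_{\bug}E_{2}\simeq V_{1}'\times_{\bug}E_{2}$ is also essentially the right reduction, although the detour through ${\rm Aut}(E)$ to ``kill the higher components'' is both unjustified as stated and unnecessary: the quotient argument just described already gives $V\simeq V_{1}\times_{\bug}E_{2}$ as soon as the first component of the class is non zero, which a permutation of the summands arranges. (Note also that the non vanishing components of $V$ and $V'$ may sit at different indices, so the reduction must be formulated symmetrically in the summands.)

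The genuine gap is that your argument stops exactly at its mathematical heart: the cancellation claim is announced as a plan (``should become a non affine coboundary'', ``implementing this explicit construction \ldots{} is the key technical content'') but never carried out, so the lemma is not proved. The paper closes precisely this step, not by an explicit cocycle manipulation, but by a short double-fibration trick: for any $k$, the fiber product $V_{1}\times_{\bug}S(k)$ is simultaneously a $\rho_{1}^{*}\mathbb{L}^{k}$-torsor over $V_{1}$ and a $\zeta_{k}^{*}\mathbb{L}^{k_{1}}$-torsor over $S(k)$; both bases are affine, so both torsors are trivial, whence $V_{1}\times_{\bug}\mathbb{L}^{k}\simeq V_{1}\times_{\bug}S(k)\simeq\mathbb{L}^{k_{1}}\times_{\bug}S(k)$, and the same chain with $V_{1}$ replaced by the standard model $S(k_{1})$ of Example \ref{exa:Sd_surfaces} gives $\mathbb{L}^{k_{1}}\times_{\bug}S(k)\simeq S(k_{1})\times_{\bug}\mathbb{L}^{k}$. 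Combining the two chains trades the arbitrary non trivial factor $V_{1}$ for $S(k_{1})$ through the second factor, and iterating identifies the total space with $S(k_{1})\times_{\bug}\cdots\times_{\bug}S(k_{r})$, a model manifestly independent of the chosen non trivial torsor and symmetric in the indices (which also disposes of the index-mismatch issue above). This is exactly where $r\geq2$ enters: one needs a second factor through which to shuttle the non triviality, precisely because, as you correctly observe, no isomorphism on the first factor alone can exist when the classes lie in distinct ${\rm Aut}(\bug)$-orbits. Whether your explicit non affine coboundary could be written down is plausible but untested; as it stands, the proposal leaves the decisive step unproven.
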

\begin{proof}
By virtue of \ref{par:VectB-on-bug} above, we may assume that $E=\bigoplus_{i=1}^{r}\mathbb{L}^{k_{i}}$,
$k_{1},\ldots,k_{r}\in\mathbb{Z}$. Given a non trivial $E$-torsor
$\rho:V\rightarrow\bug$ there exists an index $i$ such that the
$i$-th component of the isomorphy class $(v_{1},\ldots,v_{r})$ of
$V$ in $H^{1}(\bug,E)\simeq\bigoplus_{i=1}^{r}H^{1}(\bug,\mathbb{L}^{k_{i}})$
is not zero. Up to a permutation, we may assume from now on that $v_{1}\neq0$.
The actions of $\mathbb{L}^{k_{1}}$ and $E_{1}=\bigoplus_{i=2}^{r}\mathbb{L}^{k_{i}}\simeq E/\mathbb{L}^{k_{1}}$
on $V$ commute and we have a cartesian square \[\xymatrix{ V \ar[r]^-{\pi_1} \ar[d] & S_1=V/E_1 \ar[d]^{\rho_1} \\ V/\mathbb{L}^{k_1} \ar[r] & \bug }\]where
$\rho_{1}:S_{1}\rightarrow\bug$ is an $\mathbb{L}^{k_{1}}$-torsor
with isomorphy class $v_{1}\in H^{1}(\bug,\mathbb{L}^{k_{1}})$ and
where $\pi_{1}:V\rightarrow S_{1}=V/E_{1}$ is a $\rho_{1}^{*}E_{1}$-torsor.
Since $\rho_{1}:S_{1}\rightarrow\bug$ is a non trivial torsor, $S_{1}$
is an affine scheme by virtue of \ref{Par:Rank-one-bug} and so $\pi_{1}:V\rightarrow S_{1}$
is isomorphic as a scheme over $\bug$ to the total space of the trivial
$\rho_{1}^{*}E_{1}$-torsor ${\rm p}_{1}:S_{1}\times_{\bug}E_{1}\rightarrow S_{1}$.
In particular, $V$ is an affine scheme. 

With the notation of Example \ref{exa:Sd_surfaces} above, we claim
that $S_{1}\times_{\bug}E_{1}$ is isomorphic as a scheme over $\bug$
to the $r$-fold fiber product $S(k_{1})\times_{\bug}\cdots\times_{\bug}S(k_{r})$.
Indeed, since for every $k\in\mathbb{Z}$, $\zeta_{k}:S\left(k\right)\rightarrow\bug$
is an $\mathbb{L}^{k}$-torsor, the fiber product $S_{1}\times_{\bug}S\left(k\right)$
is simultaneously the total space of a $\rho_{1}^{*}\mathbb{L}^{k}$-torsor
over $S_{1}$ and of a $\zeta_{k}^{*}\mathbb{L}^{k_{1}}$-torsor over
$S\left(k\right)$ via the first and the second projection respectively.
The fact that $S_{1}$ and $S\left(k\right)$ are both affine implies
that the latter are both trivial torsors, which yields isomorphisms
$S_{1}\times_{\bug}\mathbb{L}^{k}\simeq S_{1}\times_{\bug}S\left(k\right)\simeq\mathbb{L}^{k_{1}}\times_{\bug}S\left(k\right)$
of schemes over $\bug$. The same argument applied to the non trivial
$\mathbb{L}^{k_{1}}$-torsor $\zeta_{k_{1}}:S(k_{1})\rightarrow\bug$
provides isomorphisms $S(k_{1})\times_{\bug}\mathbb{L}^{k}\simeq S(k_{1})\times_{\bug}S\left(k\right)\simeq\mathbb{L}^{k_{1}}\times_{\bug}S\left(k\right)$
of schemes over $\bug$. Letting $E_{2}=E_{1}/\mathbb{L}^{k_{2}}\simeq\bigoplus_{i=3}^{r}\mathbb{L}^{k_{i}}$,
we finally obtain isomorphisms 
\[
S_{1}\times_{\bug}E_{1}\simeq S(k_{1})\times_{\bug}S(k_{2})\times_{\bug}E_{2}\simeq S(k_{1})\times_{\bug}S(k_{2})\times_{\bug}(S(k_{3})\times_{\bug}\cdots\times_{\bug}S(k_{r}))
\]
where the last isomorphism follows from the fact that the fiber product
of the affine scheme $q:S(k_{1})\times_{\bug}S(k_{2})\rightarrow\bug$
with the $E_{2}$-torsor $S(k_{3})\times_{\bug}\cdots\times_{\bug}S(k_{r})$
is isomorphic to the trivial $q^{*}E_{2}$-torsor $S(k_{1})\times_{\bug}S(k_{2})\times_{\bug}E_{2}$
over $S(k_{1})\times_{\bug}S(k_{2})$. \end{proof}
\begin{prop}
\label{prop:Aff-bug-K0} The isomorphy type of the total space of
a non trivial affine-linear bundle $\rho:V\rightarrow\bug$ or rank
$r\geq2$ as a scheme over $\bug$ depends only on the class in $K_{0}\left(\bug\right)$
of its associated vector bundle. \end{prop}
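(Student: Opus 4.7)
By Lemma \ref{lem:K0-bug} and the splitting principle from \ref{par:VectB-on-bug}, I can write $E_{1} = \bigoplus_{j=1}^{r} \mathbb{L}^{k_{j}}$ and $E_{2} = \bigoplus_{j=1}^{r} \mathbb{L}^{k'_{j}}$ with $\sum k_{j} = \sum k'_{j}$; and by Lemma \ref{lem:TorsB-on-bug} (together with its proof), the total space of a non-trivial $E_{i}$-torsor is isomorphic over $\bug$ to the iterated fiber product $S(k^{(i)}_{1}) \times_{\bug} \cdots \times_{\bug} S(k^{(i)}_{r})$. The proposition thus reduces to the \emph{merging identity}
\[
S(m) \times_{\bug} S(n) \cong S(m+n) \times_{\bug} S(0) \qquad \text{as } \bug\text{-schemes, for every } m, n \in \mathbb{Z}.
\]
Indeed, iterated application of this identity on adjacent factors, combined with the symmetry of fiber products, collapses any tuple of fixed sum $d$ to the canonical form $(d, 0, \ldots, 0)$, so that two tuples with the same sum yield isomorphic total spaces.

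The identity being trivial if $m$ or $n$ is zero, I assume $m, n \neq 0$. In the case $0 < m \leq n$ (the case $m < 0 < n$ is handled with the matrix $N$ of Lemma \ref{lem:K0-bug}; the case $m, n < 0$ reduces to the positive case by pulling back via the automorphism $\theta$), I would exploit the non-trivial extension
\[
0 \longrightarrow \mathbb{L}^{m} \longrightarrow \mathbb{L}^{m+n} \oplus \mathcal{O}_{\bug} \longrightarrow \mathbb{L}^{n} \longrightarrow 0
\]
arising from the matrix $M$ of Lemma \ref{lem:K0-bug}. From its long exact sequence in cohomology and the vanishing of $H^{2}$ on the quasi-affine scheme $\bug$, the induced map $H^{1}(\bug, \mathbb{L}^{m+n} \oplus \mathcal{O}_{\bug}) \twoheadrightarrow H^{1}(\bug, \mathbb{L}^{n})$ is surjective. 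Since $H^{1}(\bug, \mathbb{L}^{n}) \neq 0$, I pick a non-trivial $\mathbb{L}^{n}$-torsor $T$ and lift its class to obtain a non-trivial $(\mathbb{L}^{m+n} \oplus \mathcal{O}_{\bug})$-torsor $V$ whose quotient $V / \mathbb{L}^{m}$ by the action of the sub-bundle is isomorphic to $T$ as a $\bug$-scheme.

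The remainder is a chain of isomorphisms exploiting the principle, already underlying Lemma \ref{lem:TorsB-on-bug}, that torsors under vector bundles are trivial over affine bases. Since $T$ is affine by \ref{Par:Rank-one-bug}, the $\mathbb{L}^{m}$-torsor $V \to T$ is trivial, so $V \cong T \times_{\bug} \mathbb{L}^{m}$ over $\bug$. The same principle yields $T \times_{\bug} \mathbb{L}^{m} \cong T \times_{\bug} S(m)$ (both torsors under the pullback of $\mathbb{L}^{m}$ over the affine $T$) and $T \times_{\bug} S(m) \cong S(n) \times_{\bug} S(m)$ (both torsors under the pullback of $\mathbb{L}^{n}$ over the affine $S(m)$, hence both isomorphic to $S(m) \times_{\bug} \mathbb{L}^{n}$). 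Combining gives $V \cong S(m) \times_{\bug} S(n)$. On the other hand, Lemma \ref{lem:TorsB-on-bug} applied directly to the non-trivial $(\mathbb{L}^{m+n} \oplus \mathcal{O}_{\bug})$-torsor $V$ also yields $V \cong S(m+n) \times_{\bug} S(0)$; the two descriptions of $V$ together establish the merging identity.

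The main obstacle is the construction of $V$ with non-trivial $\mathbb{L}^{n}$-quotient. The explicit non-split extensions from Lemma \ref{lem:K0-bug} are essential precisely at this point: they exhibit $\mathbb{L}^{m}$ as a sub-bundle of $\mathbb{L}^{m+n} \oplus \mathcal{O}_{\bug}$, and this additional structure is what permits a single total space to carry two incompatible affine-linear bundle structures over $\bug$. Every other step is a formal consequence of Lemma \ref{lem:TorsB-on-bug} and of the vanishing of $H^{1}$ of vector bundles on affine schemes.
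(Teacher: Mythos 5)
Your argument is correct and takes essentially the same route as the paper: the paper likewise reduces, via Lemma \ref{lem:TorsB-on-bug} and its proof, to the merging identity $S(m)\times_{\bug}S(n)\simeq S(m+n)\times_{\bug}\mathbb{A}_{\bug}^{1}$, splits into the same cases $0<m\leq n$ and $m<0<n$ using the matrices $M$ and $N$ of Lemma \ref{lem:K0-bug} and the automorphism $\theta$, and produces a single non-trivial torsor under the extension bundle carrying both affine-linear structures, the only difference being that it exhibits this torsor by an explicit gluing cocycle, namely $(x,(v_{+},u_{+}))\mapsto(x,(x^{m}v_{+}+u_{+},x^{n}u_{+}+x^{-1}))$, rather than by your abstract cohomological lifting along the surjection $H^{1}(\bug,\mathbb{L}^{m+n}\oplus\mathcal{O}_{\bug})\twoheadrightarrow H^{1}(\bug,\mathbb{L}^{n})$. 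One small correction: $\bug$ is not quasi-affine (it is not even separated), but the vanishing $H^{2}(\bug,\mathbb{L}^{m})=0$ you invoke does hold, for instance by computing with the two-set affine covering $\mathcal{U}$, whose pairwise intersection is affine, or by Grothendieck vanishing on the one-dimensional Noetherian topological space $\bug$.
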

\begin{proof}
Given a vector bundle $E\rightarrow\bug$ of rank $r\geq2$, we will
show more precisely that the total space of a non trivial $E$-torsor
$\rho:V\rightarrow\bug$ is isomorphic as a scheme over $\bug$ to
$S\left(d\right)\times_{\bug}\mathbb{A}_{\bug}^{r-1}$, where $d=-\deg\left(E\right)$.
We proceed by induction on the rank of $E$. By combining \ref{par:VectB-on-bug}
and Lemma \ref{lem:TorsB-on-bug} above, we may assume that $E=\bigoplus_{i=1}^{r}\mathbb{L}^{k_{i}}$,
where $k_{1},\ldots,k_{r}\in\mathbb{Z}$ and $-k_{1}-\cdots-k_{r}=-d$
and that $V=S(k_{1})\times_{\bug}\cdots\times_{\bug}S(k_{r})$. Furthermore,
since the induction hypothesis implies that $S(k_{2})\times_{\bug}\cdots\times_{\bug}S(k_{r})\simeq S(d-k_{1})\times_{\bug}\mathbb{A}_{\bug}^{r-1}$
as schemes over $\bug$, it is enough to show that for every $m,n\in\mathbb{Z}$,
$S(m)\times_{\bug}S(n)$ and $S(m+n)\times_{\bug}\mathbb{A}_{\bug}^{1}$
are isomorphic as schemes over $\bug$. If $m$ or $n$ is equal to
zero then we are done. Otherwise, up to taking the pull-back of $S(m)\times_{\bug}S(n)$
by the automorphism $\theta$ of $\bug$ and exchanging the roles
of $m$ and $n$, we may assume similarly as in the proof of Lemma
\ref{lem:K0-bug} above that either $0<m\leq n$ or $m<0<n$. In the
first case, letting $E\simeq\mathbb{L}^{m+n}\oplus\mathbb{A}_{\bug}^{1}$
be the vector bundle on $\bug$ defined by the matrix 
\[
M=\left(\begin{array}{cc}
x^{m} & 1\\
0 & x^{n}
\end{array}\right)\in{\rm GL}_{2}\left(\mathbb{C}\left[x^{\pm1}\right]\right),
\]
it follows from Lemma \ref{lem:TorsB-on-bug} that the total space
of the non trivial $E$-torsor $\rho:V\rightarrow\bug$ with gluing
isomorphism 

\begin{eqnarray*}
\bug_{+}\times\mathbb{A}^{2}\supset\bug_{+}^{*}\times\mathbb{A}^{2} & \stackrel{\sim}{\rightarrow} & \bug_{-}^{*}\times\mathbb{A}^{2}\subset\bug_{-}\times\mathbb{A}^{2},\\
(x,(v_{+},u_{+})) & \mapsto & (x,(x^{m}v_{+}+u_{+},x^{n}u_{+}+x^{-1}))
\end{eqnarray*}
is isomorphic to $S(m+n)\times_{\bug}\mathbb{A}_{\bug}^{1}$. On the
other hand, since $E$ is an extension of $\mathbb{L}^{n}$ by $\mathbb{L}^{m}$,
$V$ inherits a free action of $\mathbb{L}^{m}$ whose quotient $V/\mathbb{L}^{m}$
coincides with the total space of the $\mathbb{L}^{n}$-torsor $\zeta_{n}:S(n)\rightarrow\bug$
with gluing isomorphism $(x,u_{+})\mapsto(x,x^{n}u_{+}+x^{-1})$.
Furthermore, the quotient morphism $V\rightarrow V/\mathbb{L}^{m}\simeq S(n)$
inherits the structure of a $\zeta_{n}^{*}\mathbb{L}^{m}$-torsor
whence is isomorphic to the trivial one $S(n)\times_{\bug}\mathbb{L}^{m}$
as $S(n)$ is affine. Summing up, we obtain isomorphisms $S(m+n)\times_{\bug}\mathbb{A}_{\bug}^{1}\simeq V\simeq S(n)\times_{\bug}\mathbb{L}^{m}\simeq S(n)\times_{\bug}S\left(m\right)$
of schemes over $\bug$. 

The case $m<0<n$ follows from a similar argument starting from the
vector bundle $E\simeq\mathbb{L}^{m}\oplus\mathbb{L}^{n}$ defined
by the matrix 
\[
N=\left(\begin{array}{cc}
1 & x^{m}\\
0 & x^{m+n}
\end{array}\right)\in{\rm GL}_{2}\left(\mathbb{C}\left[x^{\pm1}\right]\right),
\]
and the non trivial $E$-torsor $\rho:V\rightarrow\bug$ with gluing
isomorphism 
\begin{eqnarray*}
\bug_{+}\times\mathbb{A}^{2}\supset\bug_{+}^{*}\times\mathbb{A}^{2} & \stackrel{\sim}{\rightarrow} & \bug_{-}^{*}\times\mathbb{A}^{2}\subset\bug_{-}\times\mathbb{A}^{2}\\
(x,(v_{+},u_{+})) & \mapsto & (x,(v_{+}+x^{m}u_{+},x^{m+n}u_{+}+x^{\min\left(-1,m+n-1\right)})).
\end{eqnarray*}
 
\end{proof}

\section{Isomorphy types of complements of hyperplane sub-bundles }

In this section, we consider total spaces of non trivial affine-linear
bundles $\nu:V\rightarrow\mathbb{P}^{1}$ over the projective line.
We first review the case of affine-linear bundles of rank one: the
crucial observation there is the fact that the total space of a non
trivial $\mathcal{O}_{\mathbb{P}^{1}}(-d)$-torsor $\nu:V\rightarrow\mathbb{P}^{1}$,
where $d\geq2$, is isomorphic to the affine surface $\zeta_{d-2}:S\left(d-2\right)\rightarrow\bug$
of Example \ref{exa:Sd_surfaces}, whence admits the structure of
a non trivial $\mathbb{L}^{d-2}$-torsor over the affine line with
a double origin. This enables to consider total spaces of non trivial
affine-linear bundles $\nu:V\rightarrow\mathbb{P}^{1}$ of higher
ranks as being simultaneously that of certain non trivial affine-linear
bundles over $\bug$, and to deduce the classification of total spaces
of such bundles as a particular case of the results established in
the first section.

\subsection{Affine linear bundles of rank one and the Danilov-Gizatullin Theorem}

\indent\newline\noindent The Danilov-Gizatullin Theorem \cite[Theorem 5.8.1 ]{Gizatullin1977}
asserts that the isomorphy type of the complement of an ample section
$C$ in a Hirzebruch surface $\pi_{n}:\mathbb{F}_{n}=\mathbb{P}\left(\mathcal{O}_{\mathbb{P}^{1}}\oplus\mathcal{O}_{\mathbb{P}^{1}}\left(n\right)\right)\rightarrow\mathbb{P}^{1}$,
$n\geq0$, depends only on the self-intersection $(C^{2})\geq2$ of
$C$. Since there is a one-to-one correspondence between non trivial
$\mathcal{O}_{\mathbb{P}^{1}}\left(-d\right)$-torsors $\nu:V\rightarrow\mathbb{P}^{1}$
and complements of ample sections $C$ with self-intersection $(C^{2})=d$
in Hirzebruch surfaces \cite[Remark 4.8.6]{Gizatullin1977}, the Danilov-Gizatullin
Theorem can be rephrased as the fact that the isomorphy type of the
total space of a non trivial $\mathcal{O}_{\mathbb{P}^{1}}(-d)$-torsor
$\nu:V\rightarrow\mathbb{P}^{1}$ depends only on $d$ and not on
its isomorphy class as a torsor in $H^{1}(\mathbb{P}^{1},\mathcal{O}_{\mathbb{P}^{1}}(-d))$.
We have the following more effective result: 
\begin{prop}
\label{prop:Explicit-GizDan} The total space of a non trivial $\mathcal{O}_{\mathbb{P}^{1}}\left(-d\right)$-torsor
$\nu:V\rightarrow\mathbb{P}^{1}$, $d\geq2$, is isomorphic to the
surface $\zeta_{d-2}:S\left(d-2\right)\rightarrow\bug$ of Example
\ref{exa:Sd_surfaces}. Furthermore the isomorphism $V\simeq S(d-2)$
can be chosen in such a way that $\nu^{*}\mathcal{O}_{\mathbb{P}^{1}}\left(1\right)=\zeta_{d-2}^{*}\mathbb{L}^{-1}$
in ${\rm Pic}\left(V\right)\simeq\mathbb{Z}$. \end{prop}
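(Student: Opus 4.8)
The plan is to equip the given total space with a second fibration structure over the affine line with a double origin $\bug$ and to identify it with the torsor $\zeta_{d-2}\colon S(d-2)\to\bug$ of Example~\ref{exa:Sd_surfaces}. As a preliminary reduction I would invoke the correspondence \cite{Gizatullin1977} between non trivial $\mathcal{O}_{\mathbb{P}^{1}}(-d)$-torsors and complements of ample sections of self-intersection $d$ in Hirzebruch surfaces, together with the Danilov--Gizatullin Theorem: the isomorphy type of $V$ as an abstract scheme depends only on $d$. For the first assertion it therefore suffices to exhibit the required isomorphism for a single convenient representative, and I would realize $V$ by gluing $\mathbb{A}^{2}={\rm Spec}(\mathbb{C}[t][w_{0}])$ to ${\rm Spec}(\mathbb{C}[t^{-1}][w_{\infty}])$ over $t\neq0$ by $w_{\infty}=t^{d}w_{0}+g(t)$, with $g$ representing a non zero class in $H^{1}(\mathbb{P}^{1},\mathcal{O}_{\mathbb{P}^{1}}(-d))$.

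The heart of the matter is the construction of a regular function $X\in\Gamma(V,\mathcal{O}_{V})$ defining a second $\mathbb{A}^{1}$-fibration on $V$ whose general fibres are isomorphic to $\mathbb{A}^{1}$ and whose unique degenerate fibre $X^{-1}(0)$ is a disjoint union of two reduced copies of $\mathbb{A}^{1}$. In the first non trivial case $d=2$, where $V=(\mathbb{P}^{1}\times\mathbb{P}^{1})\setminus\Delta$, one may take $X=a_{0}b_{0}/(a_{0}b_{1}-a_{1}b_{0})$: its zero locus is the union of the two rulings $\{a_{0}=0\}$ and $\{b_{0}=0\}$, which meet only at a point of $\Delta$ and hence become disjoint on $V$. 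The two components of $X^{-1}(0)$ then determine, through the gluing description of the double origin, a morphism $\rho\colon V\to\bug$ with $\delta\circ\rho=X$; over $\bug\setminus\{o_{\pm}\}$ it restricts to a rank-one affine bundle, and choosing local trivializations $V\mid_{\bug_{\pm}}\simeq\bug_{\pm}\times\mathbb{A}^{1}$ with fibre coordinates $u_{\pm}$ exhibits $\rho$ as an affine-linear bundle of rank one over $\bug$.

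It then remains to compute the transition isomorphism of $\rho$ on the overlap $\bug_{+}^{*}\cap\bug_{-}^{*}$. The expected outcome is a relation $u_{-}=x^{d-2}u_{+}+c\,x^{-1}$ with $c\in\mathbb{C}^{*}$; for $d=2$ a direct calculation gives $u_{-}=u_{+}-x^{-1}$. The factor $x^{d-2}$ identifies the underlying line bundle as $\mathbb{L}^{d-2}$, and, after composing with the automorphism $x\mapsto c\,x$ of $\bug$, the residual cocycle becomes precisely the one of $\zeta_{d-2}$, so that by construction $V\simeq S(d-2)$. I expect this to be the main obstacle: non trivial $\mathbb{L}^{d-2}$-torsors form an infinite moduli of pairwise non isomorphic surfaces sharing the same canonical bundle and Picard group, so it is not enough to check that the cocycle is non zero; one must carry out the explicit computation and recognize it, up to the action of ${\rm Aut}(\bug)\simeq\mathbb{G}_{m}\times\mathbb{Z}_{2}$, as exactly the class $x^{-1}$ of $\zeta_{d-2}$ in $H^{1}(\bug,\mathbb{L}^{d-2})$ rather than as an arbitrary non trivial class.

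Finally, for the statement on Picard groups I would argue through the canonical bundle, which is preserved by any abstract isomorphism. The relative cotangent sequence gives, from the $\mathbb{P}^{1}$-structure, $K_{V}\simeq\nu^{*}(\Omega_{\mathbb{P}^{1}}^{1}\otimes\det\mathcal{O}_{\mathbb{P}^{1}}(-d)^{\vee})\simeq\nu^{*}\mathcal{O}_{\mathbb{P}^{1}}(d-2)$, and, from the $\bug$-structure, using that $\Omega_{\bug}^{1}$ is trivial, $K_{V}\simeq\zeta_{d-2}^{*}\mathbb{L}^{2-d}$. Since $V$ is normal and $\nu,\zeta_{d-2}$ are torsors, both $\nu^{*}\mathcal{O}_{\mathbb{P}^{1}}(1)$ and $\zeta_{d-2}^{*}\mathbb{L}^{-1}$ generate ${\rm Pic}(V)\simeq\mathbb{Z}$; when $d\neq2$ the identity $(d-2)\,\nu^{*}\mathcal{O}_{\mathbb{P}^{1}}(1)=(d-2)\,\zeta_{d-2}^{*}\mathbb{L}^{-1}$ obtained by comparing the two expressions for $K_{V}$ forces $\nu^{*}\mathcal{O}_{\mathbb{P}^{1}}(1)=\zeta_{d-2}^{*}\mathbb{L}^{-1}$ for every isomorphism $V\simeq S(d-2)$, while for $d=2$ the sign is pinned down directly from the explicit isomorphism above.
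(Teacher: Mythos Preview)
Your approach is the reverse of the paper's, and this reversal creates the very obstacle you correctly identify but do not resolve. You start from an $\mathcal{O}_{\mathbb{P}^{1}}(-d)$-torsor $V$ and attempt to construct and identify a $\bug$-fibration on it; the paper instead starts from $S(d-2)$, which already carries its $\bug$-fibration by definition, and constructs an explicit $\mathcal{O}_{\mathbb{P}^{1}}(-d)$-torsor structure $\nu_{d-2}\colon S(d-2)\to\mathbb{P}^{1}$, $(x,y,z,u)\mapsto[x:y]=[y-1:z]$, with transition $(w,u)\mapsto(w^{-1},w^{d}u+w)$ read off directly from the equations of Example~\ref{exa:Sd_surfaces}. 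The refined Danilov--Gizatullin theorem \cite[Theorem~3.1]{Dubouloz2011a} then identifies any non trivial $\mathcal{O}_{\mathbb{P}^{1}}(-d)$-torsor with this particular one, compatibly with $\mathcal{O}_{\mathbb{P}^{1}}(1)$. The point is that non trivial $\mathcal{O}_{\mathbb{P}^{1}}(-d)$-torsors are all abstractly isomorphic (this \emph{is} Danilov--Gizatullin), so mapping \emph{to} $\mathbb{P}^{1}$ and invoking that theorem is painless; whereas, as you note yourself, non trivial $\mathbb{L}^{d-2}$-torsors form an infinite moduli, so mapping \emph{to} $\bug$ and landing exactly on the class of $x^{-1}$ requires a genuine computation, which you carry out only for $d=2$. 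For $d\geq3$ your proposal contains the statement of what the transition cocycle should be but not its verification, and no mechanism that would force an arbitrary non trivial cocycle to the distinguished one.

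Your canonical-bundle argument for the Picard compatibility is correct and in fact yields slightly more than the paper states: for $d\neq2$ \emph{every} abstract isomorphism $V\simeq S(d-2)$ satisfies $\nu^{*}\mathcal{O}_{\mathbb{P}^{1}}(1)=\zeta_{d-2}^{*}\mathbb{L}^{-1}$, since $(d-2)$ is a nonzerodivisor in ${\rm Pic}(V)\simeq\mathbb{Z}$. The paper instead gets the compatibility for free from the refined Danilov--Gizatullin theorem, after the one-line observation that the divisor $\nu_{d-2}^{-1}([0:1])$ coincides with $\zeta_{d-2}^{-1}(o_{+})$. So your route to the second assertion is sound, but the first assertion remains incomplete for $d\geq3$; reversing the direction of the construction, as the paper does, is precisely what removes the obstacle.
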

\begin{proof}
Recall that for every $d\geq2$, $\zeta_{d-2}:S(d-2)\rightarrow\bug$
is isomorphic to the surface in $\mathbb{A}^{4}={\rm Spec}(\mathbb{C}\left[x,y,z,u\right])$
defined by the equations 
\[
xz=y\left(y-1\right),\;\left(y-1\right)^{d-2}u=z^{d-1},\; x^{d-2}u=y^{d-2}z.
\]
Letting $\mathbb{P}^{1}={\rm Proj}(\mathbb{C}[w_{0},w_{1}])$, $U_{0}=\mathbb{P}^{1}\setminus\{\left[1:0\right]\}={\rm Spec}\left(\mathbb{C}\left[w\right]\right)$
and $U_{\infty}=\mathbb{P}^{1}\setminus\{\left[0:1\right]\}={\rm Spec}\left(\mathbb{C}\left[w'\right]\right)$
where $w=w_{0}/w_{1}$ and $w'=w_{1}/w_{0}$, one checks that the
morphism $\nu_{d-2}:S(d-2)\rightarrow\mathbb{P}^{1}$, $(x,y,z,u)\mapsto\left[x:y\right]=\left[y-1:z\right]$
defines an $\mathcal{O}_{\mathbb{P}^{1}}(-d)$-torsor with local trivializations
\[
\tau_{0}:\nu_{d-2}^{-1}(U_{0})\stackrel{\sim}{\rightarrow}{\rm Spec}\left(\mathbb{C}\left[w\right]\left[u\right]\right),\quad\tau_{\infty}:\nu_{d-2}^{-1}(U_{\infty})\stackrel{\sim}{\rightarrow}{\rm Spec}\left(\mathbb{C}\left[w'\right]\left[x\right]\right)
\]
and transition isomorphism $\tau_{\infty}\circ\tau_{0}^{-1}\mid_{U_{0}\cap U_{\infty}}$
given by $\left(w,u\right)\mapsto\left(w',x\right)=\left(w^{-1},w^{d}u+w\right)$.
Furthermore, it follows from the construction of $\zeta_{d-2}:S\left(d-2\right)\rightarrow\bug$
and $\nu_{d-2}:S(d-2)\rightarrow\mathbb{P}^{1}$ that $\nu_{d-2}^{-1}(\left[0:1\right])=\zeta_{d-2}^{-1}(o_{+})$.
Since the classes of these divisors in ${\rm Cl}\left(S(d-2)\right)\simeq{\rm Pic}\left(S(d-2)\right)$
coincide respectively with the line bundles $\nu_{d-2}^{*}\mathcal{O}_{\mathbb{P}^{1}}(1)$
and $\zeta_{d-2}^{*}\mathbb{L}^{-1}$, the assertion follows from
the ``refined'' Danilov-Gizatullin Theorem \cite[Theorem 3.1]{Dubouloz2011a}
which asserts that if $\nu_{i}:V_{i}\rightarrow\mathbb{P}^{1}$, $i=1,2$,
are non trivial $\mathcal{O}_{\mathbb{P}^{1}}\left(-d\right)$-torsors,
then there exists an isomorphism $f:V_{1}\stackrel{\sim}{\rightarrow}V_{2}$
such that $f^{*}(\nu_{2}^{*}\mathcal{O}_{\mathbb{P}^{1}}\left(1\right))\simeq\nu_{1}^{*}\mathcal{O}_{\mathbb{P}^{1}}\left(1\right)$. 
\end{proof}

\subsection{Affine-linear bundles of arbitrary ranks}

\indent\newline\noindent By combining our previous results, we obtain
the following characterization: 
\begin{thm}
\label{thm:MainThmP1} Let $p_{i}:E_{i}\rightarrow\mathbb{P}^{1}$,
$i=1,2$ be vector bundles of the same rank $r\geq1$ and let $\nu_{i}:V_{i}\rightarrow\mathbb{P}^{1}$
be non trivial $E_{i}$-torsors, $i=1,2$. Then $V_{1}$ and $V_{2}$
are affine, and isomorphic as abstract varieties if and only if $\deg(\det E_{1}^{\vee}\otimes\Omega_{\mathbb{P}^{1}}^{1})=\pm\deg(\det E_{2}^{\vee}\otimes\Omega_{\mathbb{P}^{1}}^{1})$. \end{thm}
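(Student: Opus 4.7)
The strategy is to endow the total space $V$ of any non-trivial $E$-torsor $\nu:V\rightarrow\mathbb{P}^{1}$ of rank $r\geq 1$ with the structure of a non-trivial torsor $\rho:V\rightarrow\bug$ under a vector bundle $\tilde{E}\rightarrow\bug$ of the same rank $r$ and degree $\deg\tilde{E}=\deg E+2$, and then to invoke Theorem \ref{thm:AffB-on-bug}. Since the relative cotangent sequence yields $\deg(\det E^{\vee}\otimes\Omega_{\mathbb{P}^{1}}^{1})=-\deg E-2$, the criterion $|\deg\tilde{E}_{1}|=|\deg\tilde{E}_{2}|$ produced by that theorem translates exactly into the stated condition.

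The rank-one case is essentially Proposition \ref{prop:Explicit-GizDan}: non-triviality forces $E=\mathcal{O}_{\mathbb{P}^{1}}(-d)$ with $d\geq 2$, and the proposition furnishes an abstract isomorphism $V\simeq S(d-2)$ with $\zeta_{d-2}:S(d-2)\rightarrow\bug$ a non-trivial $\mathbb{L}^{d-2}$-torsor of degree $-(d-2)=\deg E+2$, while Example \ref{exa:Sd_surfaces} supplies the abstract classification. For $r\geq 2$, Grothendieck's splitting theorem on $\mathbb{P}^{1}$ gives $E=\bigoplus_{i=1}^{r}\mathcal{O}_{\mathbb{P}^{1}}(-d_{i})$, and the isomorphy class $c(V)\in H^{1}(\mathbb{P}^{1},E)=\bigoplus_{i}H^{1}(\mathbb{P}^{1},\mathcal{O}_{\mathbb{P}^{1}}(-d_{i}))$ must have some non-zero component, corresponding to an index with $d_{i}\geq 2$. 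After permuting summands I may assume $d_{1}\geq 2$ and that the first component of $c(V)$ is non-zero. Setting $E_{1}=\bigoplus_{i\geq 2}\mathcal{O}_{\mathbb{P}^{1}}(-d_{i})$, the quotient $S_{1}=V/E_{1}$ is then a non-trivial $\mathcal{O}_{\mathbb{P}^{1}}(-d_{1})$-torsor $\pi_{1}':S_{1}\rightarrow\mathbb{P}^{1}$, and $\pi_{1}:V\rightarrow S_{1}$ inherits the structure of a $(\pi_{1}')^{*}E_{1}$-torsor.

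Proposition \ref{prop:Explicit-GizDan} provides an abstract isomorphism $S_{1}\simeq S(d_{1}-2)$ equipping $S_{1}$ with a morphism $\zeta:=\zeta_{d_{1}-2}:S_{1}\rightarrow\bug$ and satisfying the key identity $(\pi_{1}')^{*}\mathcal{O}_{\mathbb{P}^{1}}(1)\simeq\zeta^{*}\mathbb{L}^{-1}$ in $\mathrm{Pic}(S_{1})$; hence $(\pi_{1}')^{*}E_{1}\simeq\zeta^{*}E_{1}'$ where $E_{1}':=\bigoplus_{i\geq 2}\mathbb{L}^{d_{i}}$. Since $S_{1}$ is affine by \ref{Par:Rank-one-bug}, the torsor $\pi_{1}$ is trivial, so $V\simeq S_{1}\times_{\bug}E_{1}'$ as schemes over $S_{1}$. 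The projection $\rho:V\rightarrow\bug$ then naturally carries the structure of a torsor under $\tilde{E}:=\mathbb{L}^{d_{1}-2}\oplus E_{1}'$, combining the $\mathbb{L}^{d_{1}-2}$-action on the $S_{1}$-factor with the $E_{1}'$-action on the other factor; this torsor is non-trivial because the $\mathbb{L}^{d_{1}-2}$-component of its class does not vanish. A direct count gives $\deg\tilde{E}=-(d_{1}-2)-\sum_{i\geq 2}d_{i}=2-\sum_{i}d_{i}=\deg E+2$ as required, and Lemma \ref{lem:TorsB-on-bug} ensures that $V$ is affine. Theorem \ref{thm:AffB-on-bug} then yields $V_{1}\simeq V_{2}$ as abstract schemes if and only if $\deg\tilde{E}_{1}=\pm\deg\tilde{E}_{2}$, i.e.~iff $\deg E_{1}+2=\pm(\deg E_{2}+2)$, which is precisely the criterion claimed.

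The main obstacle is the identification $(\pi_{1}')^{*}\mathcal{O}_{\mathbb{P}^{1}}(1)\simeq\zeta^{*}\mathbb{L}^{-1}$ underlying this reduction: without it, the $E_{1}$-action on $V$ would not descend through $\zeta$ to an action of a vector bundle defined over $\bug$, and the transfer of structure from $\mathbb{P}^{1}$-torsors to $\bug$-torsors would break. This compatibility is precisely the strengthened form of Danilov-Gizatullin baked into Proposition \ref{prop:Explicit-GizDan}; granted it, the remaining steps---combining Lemma \ref{lem:TorsB-on-bug} (affineness and uniqueness up to isomorphism of non-trivial $\bug$-torsor total spaces) with Theorem \ref{thm:AffB-on-bug} (abstract classification by $|\deg|$)---are essentially bookkeeping.
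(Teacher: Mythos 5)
Your proposal is correct and follows essentially the same route as the paper's own proof: split $E$ via Grothendieck, quotient by the complementary summand to obtain a non-trivial $\mathcal{O}_{\mathbb{P}^{1}}(-d)$-torsor $S_{1}\simeq S(d-2)$, use the refined identification $\nu^{*}\mathcal{O}_{\mathbb{P}^{1}}(1)\simeq\zeta_{d-2}^{*}\mathbb{L}^{-1}$ of Proposition \ref{prop:Explicit-GizDan} to transfer the residual torsor structure to a non-trivial torsor over $\bug$ under $\tilde{E}$ of degree $\deg E+2$, and conclude by Theorem \ref{thm:AffB-on-bug}. Your only deviations are cosmetic and if anything slightly more careful than the paper: you treat the rank-one case explicitly via Example \ref{exa:Sd_surfaces}, and you spell out why the resulting $\bug$-torsor is non-trivial (its $\mathbb{L}^{d_{1}-2}$-component is the class of $S(d_{1}-2)$).
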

\begin{proof}
The argument is very similar to the one used in the proof of Lemma
\ref{lem:TorsB-on-bug} above. Recall that every vector bundle $E\rightarrow\mathbb{P}^{1}$
of rank $r\geq2$ splits into a direct sum $E=\bigoplus_{i=1}^{r}\mathcal{O}_{\mathbb{P}^{1}}(k_{i})$,
$k_{1},\ldots,k_{r}\in\mathbb{Z}$, of line bundles \cite{Grothendieck1957}.
Therefore, if $\nu:V\rightarrow\mathbb{P}^{1}$ is a non trivial $E$-torsor,
then there exists an index $i\in\{1,\ldots,r\}$ such that the $i$-th
component of its isomorphy class $(a_{1},\ldots,a_{r})\in H^{1}(\mathbb{P}^{1},E)\simeq\bigoplus_{i=1}^{r}H^{1}(\mathbb{P}^{1},\mathcal{O}_{\mathbb{P}^{1}}(k_{i}))$
is non zero. Letting $E_{i}=\bigoplus_{j\neq i}\mathcal{O}_{\mathbb{P}^{1}}(k_{j})\simeq E/\mathcal{O}_{\mathbb{P}^{1}}(k_{i})$,
the quotient of $V$ by the induced action of $E_{i}$ inherits the
structure of a non trivial $\mathcal{O}_{\mathbb{P}^{1}}(k_{i})$-torsor
$\nu_{i}:S_{i}\rightarrow\mathbb{P}^{1}$ with isomorphy class $a_{i}\in H^{1}(\mathbb{P}^{1},\mathcal{O}_{\mathbb{P}^{1}}(k_{i}))$.
Furthermore, the quotient morphism $V\rightarrow S_{i}=V/E_{i}$ has
the structure of a $\nu_{i}^{*}E_{i}$-torsor. Proposition \ref{prop:Explicit-GizDan}
above implies that $k_{i}=-d$ for some $d\geq2$ and that $S_{i}$
is isomorphic to the surface $\zeta_{d-2}:S(d-2)\rightarrow\bug$.
In particular, $S_{i}\simeq S(d-2)$ is affine and so $V$ is isomorphic
to the trivial $\nu_{i}^{*}E_{i}$-torsor $S_{i}\times_{\mathbb{P}^{1}}E_{i}$.
Moreover, by choosing the isomorphism $S_{i}\simeq S(d-2)$ in such
a way that $\nu_{i}^{*}\mathcal{O}_{\mathbb{P}^{1}}(1)\simeq\zeta_{d-2}^{*}\mathbb{L}^{-1}$,
we obtain that $S_{i}\times_{\mathbb{P}^{1}}E_{i}$ is an affine variety,
isomorphic as a scheme over $\bug$ to the total space of a non trivial
torsor under the vector bundle $\tilde{E}=\mathbb{L}^{d-2}\oplus\mathbb{L}^{-k_{2}}\oplus\cdots\oplus\mathbb{L}^{-k_{r}}$.
Since $\deg\tilde{E}=-\deg(\det E^{\vee}\otimes\Omega_{\mathbb{P}^{1}}^{1})$,
the assertion follows from Theorem \ref{thm:AffB-on-bug} above. \end{proof}
\begin{example}
Let us consider again the example given in the introduction of an
affine variety which is simultaneously the total space of non trivial
torsors under complex vector bundles of different topological types.
The Euler exact sequence $0\rightarrow\Omega_{\mathbb{P}^{1}}^{1}\stackrel{j}{\rightarrow}\mathcal{O}_{\mathbb{P}^{1}}(-1)^{\oplus2}\rightarrow\mathcal{O}_{\mathbb{P}^{1}}\rightarrow0$
on $\mathbb{P}^{1}$ defines a non trivial $\Omega_{\mathbb{P}^{1}}^{1}$-torsor
$v:V\rightarrow\mathbb{P}^{1}$ with total space isomorphic to the
complement  of the diagonal $\Delta\simeq\mathbb{P}(\Omega_{\mathbb{P}^{1}}^{1})$
in $\mathbb{P}(\mathcal{O}_{\mathbb{P}^{1}}(-1)\oplus\mathcal{O}_{\mathbb{P}^{1}}(-1))\simeq\mathbb{P}^{1}\times\mathbb{P}^{1}$.
For every $k\in\mathbb{Z}$, the variety $V_{k}=V\times_{\mathbb{P}^{1}}\mathcal{O}_{\mathbb{P}^{1}}\left(k\right)\rightarrow\mathbb{P}^{1}$
is then a torsor under the vector bundle $F_{k}=\Omega_{\mathbb{P}^{1}}^{1}\oplus\mathcal{O}_{\mathbb{P}^{1}}\left(k\right)$.
Since $\deg(\det(F_{-k}^{\vee})\otimes\Omega_{\mathbb{P}^{1}}^{1})=k=-\deg(\det(F_{k}^{\vee})\otimes\Omega_{\mathbb{P}^{1}}^{1})$,
Theorem \ref{thm:MainThmP1} implies that $V_{-k}$ and $V_{k}$ are
isomorphic affine varieties. The exact sequence 
\[
0\rightarrow F_{k}=\Omega_{\mathbb{P}^{1}}^{1}\oplus\mathcal{O}_{\mathbb{P}^{1}}(k)\stackrel{j\oplus{\rm id}}{\rightarrow}E_{k}=\mathcal{O}_{\mathbb{P}^{1}}(-1)^{\oplus2}\oplus\mathcal{O}_{\mathbb{P}^{1}}(k)\rightarrow\mathcal{O}_{\mathbb{P}^{1}}\rightarrow0
\]
provides an open embedding of $V_{k}\simeq V_{-k}$ into $\mathbb{P}(E_{k})$
as the complement of the hyperplane sub-bundle $H_{k}=\mathbb{P}(F_{k})$.
Note that if $k=0$ then $H_{0}$ is nef but not ample and that if
$k>2$ then $H_{-k}$ is ample whereas $H_{k}$ has negative self-intersection
$(H_{k}^{3})=2-k$. 

More generally, for any $n\geq2$, the complement $V$ in $\mathbb{P}^{n}\times\mathbb{P}^{n}={\rm Proj}(\mathbb{C}[x_{0},\ldots,x_{n}])\times{\rm Proj}(\mathbb{C}[y_{0},\ldots,y_{n}])$
of the ample divisor $D$ with equation $\sum_{i=0}^{n}x_{i}y_{i}=0$
inherits simultaneously via the first and the second projection the
structure of an $\Omega_{\mathbb{P}^{n}}^{1}$-torsor $\nu_{i}:V\rightarrow\mathbb{P}^{n}$
associated with the Euler exact sequence $0\rightarrow\Omega_{\mathbb{P}^{n}}^{1}\rightarrow\mathcal{O}_{\mathbb{P}^{n}}(-1)^{\oplus n+1}\rightarrow\mathcal{O}_{\mathbb{P}^{n}}\rightarrow0$
on each factor in $\mathbb{P}^{n}\times\mathbb{P}^{n}$. Since $D$
is of type $(1,1)$ in ${\rm Pic}\left(\mathbb{P}^{n}\times\mathbb{P}^{n}\right)\simeq{\rm p}_{1}^{*}{\rm Pic}(\mathbb{P}^{n})\oplus{\rm p}_{2}^{*}{\rm Pic}\left(\mathbb{P}^{n}\right)$,
we have for every $k\in\mathbb{Z}$, $\nu_{2}^{*}\mathcal{O}_{\mathbb{P}^{n}}(k)=\nu_{1}^{*}\mathcal{O}_{\mathbb{P}^{n}}(-k)$
in ${\rm Pic}\left(V\right)\simeq\mathbb{Z}$. Therefore, similarly
as in the previous case, we may interpret $V\times_{\nu_{1},\mathbb{P}^{n}}\mathcal{O}_{\mathbb{P}^{n}}(-k)\simeq V\times_{\nu_{2},\mathbb{P}^{n}}\mathcal{O}_{\mathbb{P}^{n}}(k)$
as being simultaneously the total space of an $\Omega_{\mathbb{P}^{n}}^{1}\oplus\mathcal{O}_{\mathbb{P}^{n}}(-k)$-torsor
and of an $\Omega_{\mathbb{P}^{n}}^{1}\oplus\mathcal{O}_{\mathbb{P}^{n}}(k)$-torsor
over $\mathbb{P}^{n}$ via the first and the second projection respectively. 
\end{example}
\bibliographystyle{amsplain}

\providecommand{\bysame}{\leavevmode\hbox to3em{\hrulefill}\thinspace}
\providecommand{\MR}{\relax\ifhmode\unskip\space\fi MR }
\providecommand{\MRhref}[2]{%
  \href{http://www.ams.org/mathscinet-getitem?mr=#1}{#2}
}
\providecommand{\href}[2]{#2}
\begin{thebibliography}{}

\end{thebibliography}


\begin{thebibliography}{1}

\bibitem{DubG03}
A.~Dubouloz, \emph{{D}anielewski-{F}ieseler {S}urfaces}, {T}ransformation
  {G}roups \textbf{10} (2005), no.~2, 139--162.

\bibitem{Dubouloz2011a}
A.~Dubouloz and D.~Finston, \emph{On exotic affine 3-spheres}, Preprint
  ar{X}iv:1106.2900v1 (2011).

\bibitem{Fieseler1994}
K.-H. Fieseler, \emph{On complex affine surfaces with $\mathbb{C}^+$-action},
  Comment. Math. Helv. \textbf{69} (1994), no.~1, 5--27.

\bibitem{Flenner2009}
H.~Flenner, S.~Kaliman, and M.~Zaidenberg, \emph{On the {D}anilov-{G}izatullin   {I}somorphism {T}heorem}, L'Enseignement Math\'matique \textbf{55} (2009), no.~2, 1--9.

\bibitem{Gizatullin1977}
M.~H. Gizatullin and V.~I. Danilov, \emph{Automorphisms of affine surfaces
  {II}}, Izv. Akad. Nauk SSSR Ser. Mat. \textbf{41} (1977), no.~1, 54--103.

\bibitem{Grothendieck1957}
A.~Grothendieck, \emph{Sur la classification des fibr\'es holomorphes sur la
  sph\`ere de {R}iemann}, American {J}ournal of {M}aths. \textbf{79} (1957),
  no.~1, 121--138.

\bibitem{Grothendieck}
A.~Grothendieck and J.~Dieudonn\'e, \emph{{EGA IV}. {\'E}tude locale des sch{\'e}mas et des morphismes de sch{\'e}mas}, Publ. Math. IHES, vol. 20 (1964), 24 (1965), 28 (1966), 32
  (1967).

\end{thebibliography}

\end{document}